\colorlet{darkblue}{blue!50!black}
\colorlet{darkmagenta}{magenta!80!black}
\newcommand{\diver}{\mathop{\rm div}\nolimits}
\let\@fnsymbol\@arabic
\begin{document}
\title{Synchronisation for scalar conservation laws via Dirichlet boundary}
\author{Ana Djurdjevac\footnote{Freie Universität Berlin, Germany; e-mail: \href{mailto:adjurdjevac@zedat.fu-berlin.de
}{adjurdjevac@zedat.fu-berlin.de
}}\and Tommaso Rosati\footnote{University of Warwick, UK; e-mail:
\href{mailto:t.rosati@warwick.ac.uk}{t.rosati@warwick.ac.uk}}}
\date{\today}

\maketitle

\maketitle
\begin{abstract}
 
    We provide an elementary proof of geometric synchronisation for scalar conservation
    laws on a domain with Dirichlet boundary conditions. Unlike 
    previous results, our proof does
    not rely on a strict maximum principle, and builds instead on a
    quantitative estimate of the dissipation at the boundary. We identify a
    coercivity condition under which the estimates are uniform over all initial
    conditions, via the construction of suitable super- and sub-solutions. In lack of such coercivity our results build on $ L^{p} $
    energy estimates and a Lyapunov structure.

\smallskip
\noindent
{\bf AMS subject classifications:} 60H15; 37H15; 37L55; 37A25.

\smallskip
\noindent
{\bf Keywords:} Mixing; Synchronisation; Burgers; Scalar conservation law;
Lyapunov exponent.
\end{abstract}

\setcounter{tocdepth}{1}
\tableofcontents


\section*{Introduction}

This article concerns long-time properties of stochastic scalar conservation laws
\begin{align}
\partial_{t} u+ \diver A(u) - \nu \Delta u   = \xi \;, \qquad
    u(0,x)=u_0(x) \;,\label{eqn:main}
\end{align}
for $ t \geqslant 0 $ and $ x \in \mD $, where $ \mD $ is a bounded domain with
homogeneous Dirichlet boundary conditions
\begin{align}
 u \big\vert_{(0, \infty) \times \partial \mD} = 0 \;. \label{e:bd}
\end{align}
In addition, $ A = (A_{i})_{i=1}^{d} \colon \RR \to \RR^{d} $ is a vector-valued nonlinearity, $
u_{0} \in C(\overline{\mD}) $ an initial value, 
$ \xi $ a noise and $ \nu > 0 $ a viscosity parameter (later on, we will fix $
\nu = 1 $). Our aim is to prove exponential synchronisation:
\begin{align}\label{e:synchro}
\| u^{1}_{t} - u^{2}_{t} \|_{L^{1}} \lesssim_{ u_{0}^{1}, u_{0}^{2}}  e^{- \mu t} \;,
\qquad \forall t \gg 1 \;,
\end{align}
for $ u^{i} $ solution to \eqref{eqn:main} with initial condition $
u^{i}_{0} $ and $ \mu > 0 $ some positive parameter.


Both ergodicity and synchronisation for scalar conservation laws have been
much studied in past, motivated for instance by interest in mixing and turbulence for toy
models in fluid dynamics.  First results were obtained by Sinai \cite{Sinai1991Buergers} in the periodic and viscous setting, then extended
to the inviscid regime in a celebrated work \cite{WeinanKhaninMazelSinai}.
Later works have concentrated on the analysis of the inviscid case ($ \nu \ll 1
$ or $ \nu = 0 $) or on
infinite volume. In the latter case, we cite just some of the most recent works
and refer the reader to the many references therein. Relevant to our discussion
are for example results on the existence and uniqueness of invariant measures for
Burgers' on the line \cite{Dunlap21BurgersLine} and a recent existence result
for a general class of viscous scalar conservation laws \cite{Dunlap22sclLine}. 

On finite volume much research has concentrated on the study of
synchronisation either uniformly over the viscosity $ \nu $ (in the regime $
\nu \ll 1 $) or for the inviscid case $ \nu = 0 $. We highlight in this direction a series
of results by Boritchev \cite{Boritchev18Exponential,
Boritchev2016MultidimBurgers, Boritchev2014Turbulence} (the latter being a
review article) and by Debussche and Vauvelle
\cite{DebusscheVovelle2012Existence,DebusscheVovelle2015Invariant}, see also
the many references therein. The work \cite{Boritchev18Exponential} establishes an
exponential convergence to the invariant measure in the inviscid case, building
on the particular Lagrangian structure appearing if $ \nu = 0 $. 
This result does not extend to the case $ 0 < \nu \ll 1 $. Instead, if $ \nu > 0 $ only
polynomial rate of convergence is known \cite{Boritchev2016MultidimBurgers},
uniformly over the viscosity parameter. Such result builds on Kruzhkov's
maximum principle in the periodic setting, which guarantees a bound on the
derivative of the form $ |\partial_{x} u | \leqslant C t^{-1} $, if no forcing is present
and for strictly convex $ A $. The result by Debussche and Vauvelle
\cite{DebusscheVovelle2015Invariant} proves instead uniqueness of the invariant
measure in the inviscid case beyond the convexity assumption used by Boritchev,
but their result does not provide uniform convergence rates over $ \nu $. All
mentioned results work in particular settings in which the random forcing $
\xi $ in \eqref{eqn:main} allows for ``small-noise zones'' (in the language of
\cite{Boritchev2016MultidimBurgers}), namely where the forcing vanishes, or is
particularly small, for some time: in these zones one can make use of the $
t^{-1} $ decay provided by the maximum principle to obtain synchronisation with
a polynomial rate of convergence.
Such argument does not allow, for instance, for $ \xi $
to be a deterministic, time-independent and non-zero function.

By contrast, in the viscous and periodic case exponential convergence holds for
just about any noise, as long as the equation is well defined, as a consequence
of a strong maximum principle, following for example the original argument by
Sinai. In fact, since scalar conservation laws contract over
time in $ L^{1} $, the aim in proving synchronisation is to show that the
contraction is strict most of the time, providing quantitative estimates on its
rate. For viscous equations with periodic boundary the contraction constant can
be controlled by the (strictly positive) infimum of the fundamental solution to
a linear equation.

This leads us to the present case of viscous equations with Dirichlet boundary
conditions. We immediately remark that our results do not hold uniformly over
the viscosity parameter, and indeed in the following we will assume that
$$ \nu = 1 \;. $$ 
Yet even in this setting exponential contraction is not at first evident, since
the original argument through the strict maximum principle does not work: the
infimum of a fundamental solution to a linear equation with Dirichlet boundary
is zero, attained at the boundary. A natural idea is to fix this issue by
separating the boundary effect from the bulk behavior. Such is the approach
taken by Shirikyan and coauthors (including one of the present authors)
\cite{Shirikyan18Mixing, Shirikyan18ControlTheory,
djurdjevac2022stabilisation}, yet these results require
the presence of a non-degenerate force and the solution of a control problem.

In the present work we take a different approach, and provide a bound
on the contraction constant through an elementary estimate on the dissipation
of mass at the boundary, which does not require any non-degeneracy condition.
In particular, our approach solves for example the second open problem in
\cite{djurdjevac2022stabilisation}.
In our proof, such dissipation at the boundary is an effect of the viscosity, rather
than the nonlinearity, so there is no hope for
estimates that are uniform over $ \nu $. Yet we believe that if
the forcing additionally satisfies conditions leading to  ``small-noise regions'' as
described above, then  under the assumption that $ A
$ is coercive (a weaker assumption than the uniform convexity in the work by
Boritchev) similar uniform estimates as those presented in
\cite{Boritchev2016MultidimBurgers} should hold. 

Coercivity appears
because under such assumption in presence of Dirichlet boundary one can construct explicit super- and
sub-solutions which guarantee (in absence of forcing) that $ | u_{t} | \leqslant
Ct^{-1} $ for some $ C >0 $ depending on the domain and uniform over all
initial conditions. These bounds take the r\^ole of
Kruzhkov's maximum principle in \cite{Boritchev2016MultidimBurgers}. They were
employed already in \cite{Shirikyan18Mixing, djurdjevac2022stabilisation}, and
are related to $ N $-waves
and the appearance of shocks in scalar conservation laws. In lack of coercivity
we still obtain our result, but without uniformity over all initial conditions,
and we rely on $ L^{p} $ energy estimates in the spirit of
\cite{Gyiongy00Wellposed}.

The construction of explicit super- and sub-solutions is particularly
interesting as it suggests a possibility of proving coming down from infinity,
a property understood for $ \Phi^{4}_{d} $ equations
\cite{MourratWeber17Infinity}, and global in time well-posedness for a wide
class of scalar conservation laws driven by irregular noise (cf.\
\cite{ZhuZhu2022HJB, PerkowskiRosati2019KPZ} for results for KPZ on the real line)
without the use of the Cole-Hopf transformation, which is in general not
available. 

We leave this question to future investigations. Similarly, we did not
present here the most general possible setting: we believe that our results 
apply for instance to inhomogeneous boundary or quasilinear strictly
elliptic equations. In addition, synchronisation as in \eqref{e:synchro} is only
one exemplary longtime property, and following similar arguments as in our proofs one can
recover the existence a one-point attractor, often referred to as a one force,
one solution principle.


\subsection*{Acknowledgments}
The authors are very grateful to Armen Shirikyan for several interesting
discussions and comments. The research of ADj has been partially funded 
by Deutsche Forschungsgemeinschaft (DFG) through the grant CRC
1114 ``Scaling Cascades in Complex Systems", Project Number 235221301. 
This work was written in vast majority while TR was employed at Imperial
College London. TR is very grateful to Martin Hairer and the Royal Society for financial support
through the research professorship grant RP\textbackslash R1\textbackslash
191065.

\subsection*{Notation and conventions}

We define $ \NN = \{ 0, 1, 2,\dots \} $ and $ \RR_{+} = [0, \infty) $. For any
bounded measurable set $ \mD \subseteq \RR^{d} $ and $ p \in [1, \infty] $
consider $ L^{p} (\mD) $ the space of functions (modulo changes on null-sets)
defined by the norm $ \| \varphi \|_{L^{p}} = \left( \int_{\mD} | \varphi
(x) |^{p} \ud x \right)^{\frac{1}{p}} $, with the usual convention for $
p = \infty $. 
For brevity we shall use the shorthand notation $ \| \varphi \|_{p} = \| \varphi
\|_{L^{p}} $. We write $ \langle f, g \rangle = \smallint_{\mD} f(x)
g(x) \ud x $ whenever the integral is defined. For any two topological spaces $
X, Y $ we write $ C(X; Y) $ for
the space of continuous functions $ f \colon X \to Y $, with the topology of
uniform convergence on all compact sets. If $ Y = \RR $ we simply write $
C(X) = C(X; \RR) $. Similarly, for $ m, n, k \in \NN $ we write $
C^{k}(\RR^{m}; \RR^{n}) \subseteq C(\RR^{m}; \RR^{n}) $ for the space of $ k $ times differentiable
functions, with all derivatives continuous (and the topology of uniform
convergence on compact sets for all derivatives). Function $F = (F_{i})_{i =1}^{d}$ belongs to  $C^{1}(\mD; \RR^{d}) $ if and only if for every $i=1, \dots, d$, $D^\alpha F_i$ continuously extends to $\overline{\mD}$ for every multiindex $\alpha$, $|\alpha | \leq k$. 
For $ F = (F_{i})_{i =1}^{d} \in C^{1}(\mD; \RR^{d}) $ we define $ \div(F) = \sum_{i
=1}^{d} \partial_{x_{i}} F_{i} $.
For any set $ \mX $ and functions $ f, g \colon
\mX \to \RR $ we write $$ f \lesssim g $$ if there exists a constant $ C >0 $
such that $ f(x) \leqslant C g(x)$ for all $ x \in \mX $. If the
proportionality constant depends on some parameter $ \vt $, we may highlight
this by writing $ f \lesssim_{\vt} g $.
For any $\gamma \in (0,1]$ we denote by $C^\gamma(\overline{\mD})$ the H\"older space. 
We will denote by $ P_{t} $ the Dirichlet heat semigroup on $ \mD $, meaning that  $ P_{t} f $ solves $ \partial_{t}
        (P_{t} f) =  \Delta (P_{t} f) $ on $ \mD $ with $ P_{0}f = f $ and
    $ P_{t} f = 0 $ on $ \partial \mD $. 

\section{Main results}

 The precise setting in which we will work is described below.

\begin{assumption}\label{assu:noise}
Here we collect the requirements on the domain, the boundary conditions, the
nonlinear vector field $ A $ and the noise.
    \begin{enumerate}
        \item \textbf{(Domain and boundary conditions)} Let $ \mD \subseteq \RR^{d} $ be and
            open domain with $ C^{2} $ boundary $ \partial \mD $.

        \item \textbf{(Nonlinearity)} We assume that $ A \in
            C^{1}(\RR ; \RR^{d}) $, and that there exists an $ \mf{ a} \in [1, \infty]
            $ such that one of the following conditions holds, depending on the
            value of $ \mf{a} $.
            \begin{itemize}
                \item \textbf{(Component-wise coercivity: if $ \mf{a} = \infty
                    $)} There exist constants $\alpha, \beta \in \RR_{+} $ such
                    that
                    \begin{equation}\label{e:coerc}
                    \begin{aligned}
                        A_{i}^{\prime}&( u) \mathrm{sign}(u)  \geqslant \alpha | u | -
                        \beta \;, \qquad \forall i \in \{ 1, \dots, d \} \;,  u \in \RR \;.
                    \end{aligned}
                  \end{equation} 
              \item \textbf{(Polynomial growth: if $ \mf{a} < \infty $)} There
                  exists a constant $C >0 $ such that
                  \begin{align*}
                      | A(u) | \leqslant C(1 + | u |)^{\mf{a}} \;, \qquad |
A^{\prime} (u) | \leqslant C (1 + | u |)^{\mf{a}} \;.
                  \end{align*} 
            \end{itemize}
        \item \textbf{(Noise)} Consider a probability space $ (\Omega, \mF,
            \PP) $ enhanced with a map $ \vt \colon \ZZ \times \Omega \to
            \Omega $ such that $ (\Omega, \mF, \PP, \vt) $ is an
            \emph{ergodic, invertible metric dynamical system}, supporting for
            some $ n \in \NN $ cocycles $$ \psi_{0} \colon \Omega \to C(\RR_+ \times
            \overline{\mD}) \;, \quad \text{ and } \quad  B^{k} \colon \Omega \to
            C(\RR_+) \;, \quad \forall k \in \{ 1, \dots, n \} \;,
            $$
            such that $ \{ B^{k}
            \}_{k = 1}^{n} $ is a set of i.i.d.\ Brownian motions and
            $ \psi_{0} $ satisfies for any $ p \geqslant 0 $ the moment
            estimate
            \begin{equation}\label{e:psi-0-mmt}
                \sup_{t \geqslant 0} \EE \| \psi_{0} (t , \cdot)
                \|_{\infty}^{p} < \infty \;.
            \end{equation}
            Then define
            \begin{align*}
                \xi(\omega, t, x) = \psi_{0} (\omega, t, x) + \sum_{k
                =1}^{n} \psi_{k}(x) \dot{B}^{k}_{t}(\omega) \;,
            \end{align*}
            for some $ \{ \psi_{k} \}_{k =1}^{n} \subseteq
            C^{\alpha_{0}}( \overline{\mD}) $, for some $ \alpha_{0} \in (0, 1) $. 
        If $ \mf{a} < \infty $ (for $ \mf{a} $ as in the previous point of the
        assumption), then we additionally assume that $ \psi_{0} $ is not
        random, that is $ \psi(\omega, t,x) = \psi_{0}(t, x)$, so that
        \eqref{e:psi-0-mmt} reads
        \begin{align*}
            \sup_{t \geqslant 0} \sup_{x \in \mD} | \psi_{0} (t, x) | <
            \infty \;.
        \end{align*}
        \end{enumerate}
\end{assumption}

We observe that under Assumption~\ref{assu:noise} the noise $ \xi $ can
potentially be degenerate, indeed even $
\xi (\omega, t, x) = h (x) $ for a H\"older-continuous, time-independent function $ h
$ is allowed. This reflects the fact that the main mechanism behind
synchronisation is the structure of the equation, and in
particular order preservation, cf.\ Lemma~\ref{lem:l1-contraction}, which leads
to an $ L^{1} $ contraction. In particular, our choice of $ \xi $ covers the
setting of \cite{djurdjevac2022stabilisation} and \cite{Shirikyan18Mixing},
where the authors require respectively a one-dimension and a two-dimensional
noise to obtain mixing. We remark the slight difference between the settings $
\mf{a} = \infty $ and $ \mf{a} < \infty $. In the first we have extremely
strong $ L^{\infty} $ estimates, which allow us to treat also non-Markovian
settings (which is the case if $ \psi_{0} $ is allowed to be a generic
cocycle): instead, if $ \mf{a} < \infty $, we need to use $ L^{p} $ bounds that
are less strong (in particular not uniform over all initial conditions). It is
then convenient to have some kind of Markov structure on the solutions $ u
$: the requirement that $ \psi_{0} $ be deterministic guarantees such a
structure, but milder assumptions might be sufficient (for example requiring a
finite range of dependence).

As for the regularity of the noise, it is of course of extreme interest to
understand whether our methods apply to low regularity, for example due to the
connection with the Burgers' equation driven by the gradient of space-time white
noise, which is linked to the KPZ equation \cite{Hairer13Solving}. Our
assumption that $ \psi_{k} \in C^{\alpha}  $ for $ k \in \{ 1, \dots, n \} $
and $ \alpha > 0 $ reflects the fact that we need $ z = (\partial_{t} -
\Delta)^{-1} \xi \in C^{\beta} $ for some $ \beta \geqslant 1 $, cf.\
Lemma~\ref{lem:reg-z}: we need $
z $ differentiable to be able to construct the super-solution $
\varphi^{+}  $ to \eqref{eqn:for-w} that is required for the $ L^{\infty} $
a-priori estimate in the case $ \mf{a} = \infty $, see the proof or
Proposition~\ref{prop:wp}. 

As for the nonlinearity, the two requirements in Assumption~\ref{assu:noise}
are substantially different. If $ \mf{a} = \infty $ we do not need any
growth assumption on $ A $ , as we can use the coercivity to
construct super- and sub-solutions that allow to control the norm $ \| u_{t}
\|_{\infty} $ at any positive time $ t > 0 $, uniformly over all initial
conditions $ u_{0} $: this control is somewhat in the spirit of the coming down from
infinity property of the $ \Phi^{4}_{d} $ model \cite{MourratWeber17Infinity}. 
The coercivity assumption we impose holds \emph{component-wise}, namely for
each component $ A^{\prime}_{i} $ for $ i \in \{ 1, \dots, d \} $. In the case
of smooth (in both space and time) driving noise this condition can be relaxed
to the more natural
\begin{align*}
    \sum_{i =1}^{d} A^{\prime}_{i} (u) \mathrm{sign} (u) \geqslant \alpha | u | - \beta \;.
\end{align*}
On the other hand, in the case $ \mf{ a} < \infty $ we do not assume any particular structure
on $ A $, other than some polynomial growth. Here we use $ L^{p} $ energy
estimates in the spirit of \cite{Gyiongy00Wellposed}, and our results are not
uniform over the initial condition. 

Finally, the requirement that the boundary $ \partial \mD $ is of class $
C^{2} $ is purely technical, so that we can apply the theory of analytic
semigroups as presented in \cite[Chapters 2, 3]{Lunardi95Analytic}. 

In the setting we have introduced, let us now clarify the notion of 
solution that we will work with.  It will be useful, also for later
convenience, to decompose the solution $ u $ to \eqref{eqn:main} as follows:
\begin{align*}
    u =   z + \varphi \;,
\end{align*}
where $ z $ contains the stochastic forcing and $ \varphi $ all the rest. Namely
    \begin{equation}\label{e:z} 
        \partial_{t} z =  \Delta z + \psi_{0} + \sum_{k =1}^{n} \psi_{k}
        \dot{B}^{k}_{t} \;, \qquad z(0, \cdot ) = 0 \;, \qquad z
        \big\vert_{(0, \infty) \times \partial \mD}(t, \cdot) = 0 \;,
    \end{equation}
so that $\varphi $ should solve
    \begin{equation}\label{eqn:for-w}
        \partial_{t} \varphi =  \Delta \varphi - \div (A (\varphi + z)) \;, \qquad
        \varphi (0, \cdot) = u_{0}(\cdot) \;, \qquad \varphi
        \big\vert_{(0, \infty) \times \partial \mD} = 0  \;.
    \end{equation}

\begin{definition}\label{def:weak-solutions}
    Under Assumption~\ref{assu:noise}, for any $ \varrho \in  [\mf{a}, \infty]
    $ and $ u_{0} \in L^{\varrho} $ we say that $ u $ is a mild
    solution to \eqref{eqn:main} if $ u =  z + \varphi $, with $ z $ satisfying \eqref{e:z} and $
    \varphi \in C([0, \infty); L^{\varrho} ) $ given by
    \begin{align*}
        \varphi_{t}  = P_{t} u_{0} - \int_{0}^{t} P_{t -s} \div (A
        (\varphi_{s} + z_{s} )) \ud s \;.
\end{align*}

\end{definition}

The next result guarantees that Equation~\eqref{eqn:main} is well-posed for all
times, together with suitable a-priori estimates, depending on the choice of $
\mf{a} $ in Assumption~\ref{assu:noise}.

\begin{proposition}\label{prop:wp}
    Under Assumption~\ref{assu:noise} there exists a $ \varrho (\mf{a}, d) \in
    [\mf{a}, \infty] $ such that for
    any $ u_{0} \in L^{\varrho} $ there exists a unique mild solution to
    \eqref{eqn:main} in the sense of Definition~\ref{def:weak-solutions}. In addition
    for some $ C_{1}, C_{2} \in (0, \infty) $
    \begin{align*}
        & \sup_{t \geqslant 1}  \EE  \left[  \sup_{u_{0} \in L^{\infty}}\sup_{s \in [t,
        t+1]}\| u_{s} \|_{\infty}\right]   \leqslant C_{1} \;, \qquad & & \text{
        if } \mf{a} = \infty  \;, \\
        & \sup_{t \geqslant 1} \EE  \left[ \sup_{s \in [t,
        t+1]}\| u_{s} \|_{ \infty}\right]  \leqslant C_{1} (
            e^{- C_{2} t}\| u_{0} \|_{L^{\varrho}} + 1) \;, \qquad & & \text{
        if } 1 \leqslant \mf{a} < \infty  \;.
    \end{align*}
\end{proposition}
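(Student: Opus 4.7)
The plan splits into three parts: local-in-time construction of the mild solution $\varphi$, a-priori estimates in the two regimes $\mf{a} = \infty$ and $\mf{a} < \infty$, and globalisation together with the averaged $L^\infty$ bound. I would first dispatch the auxiliary equation~\eqref{e:z}: Lemma~\ref{lem:reg-z} together with \eqref{e:psi-0-mmt} gives $z \in C([0,\infty); C^\beta(\overline{\mD}))$ for some $\beta \geqslant 1$, with $\sup_{s \in [t, t+1]} \EE \|z_s\|_{C^\beta}^p$ finite and independent of $t \geqslant 1$ for every $p \geqslant 1$. The remainder of the analysis is then purely pathwise on~\eqref{eqn:for-w}.

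For local well-posedness I would run a Picard iteration in $C([0, T]; L^{\varrho})$ with $\varrho = \varrho(\mf{a}, d)$ large enough that $A(\varphi + z) \in L^{\varrho/\mf{a}}(\mD)$ whenever $\varphi \in L^{\varrho}$ and $z \in L^{\infty}$. The engine is the smoothing bound
\begin{equation*}
    \| P_t \div F \|_{L^{\varrho}} \lesssim t^{-\frac{1}{2} - \frac{d}{2}\left(\frac{1}{\varrho'} - \frac{1}{\varrho}\right)} \|F\|_{L^{\varrho'}}, \qquad \varrho' \leqslant \varrho,
\end{equation*}
available from the analytic semigroup theory on $L^{\varrho}(\mD)$ with Dirichlet boundary, cf.~\cite[Ch.~3]{Lunardi95Analytic}. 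Combined with the local Lipschitz bound on $A$, this turns the mild formulation of~\eqref{eqn:for-w} into a contraction on small time intervals, with uniqueness obtained by a parallel difference argument.

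For the a-priori estimate in the coercive case $\mf{a} = \infty$, I would construct $u_0$-independent super- and sub-solutions. The natural ansatz is $\varphi^{+}(t,x) = F(t) + G(x)$, with $G \in C^2(\overline{\mD})$ non-negative, vanishing on $\partial \mD$, and $F \colon (0, \infty) \to \RR_+$ satisfying $F(0^+) = +\infty$ and $F(t) \lesssim 1/t$ for small $t$. Expanding $\div A(\varphi^+ + z) = \sum_i A_i'(\varphi^+ + z)(\partial_{x_i} G + \partial_{x_i} z)$ and using the component-wise coercivity~\eqref{e:coerc} together with the pathwise $C^1$ control on $z$, the coercive contribution dominates once $\varphi^+$ is large, reducing the super-solution property to an ODE $\dot{F} \geqslant -cF^2 + \text{lower order}$ that is compatible with $F(t) \sim 1/t$. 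A standard comparison principle for the resulting quasilinear parabolic equation (applicable a posteriori, once $\varphi + z$ is known bounded) then yields $\|\varphi_t\|_{L^\infty} \leqslant F(t) + \|G\|_{L^\infty}$ uniformly over $u_0 \in L^\infty$; adding $\|z_t\|_{L^\infty}$ and taking expectation delivers the first bound of the proposition.

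In the sub-coercive case $\mf{a} < \infty$ uniformity in $u_0$ is genuinely lost, and I would rely on $L^p$ energy estimates in the spirit of~\cite{Gyiongy00Wellposed}. Testing~\eqref{eqn:for-w} against $|\varphi|^{p-2}\varphi$, the Dirichlet Laplacian produces a positive term $c_p \int |\nabla(|\varphi|^{p/2})|^2 \,\dd x$ which, via the Poincar\'e inequality on $\mD$, dominates $c_p' \|\varphi\|_{L^p}^p$. The nonlinear term, after integration by parts, splits into a conservative piece plus an error bounded via the polynomial growth $|A(u)| \lesssim (1 + |u|)^{\mf{a}}$ by $\|z\|_{C^1}^{\mf{a}}(1 + \|\varphi\|_{L^p}^{\mf{a}})$; choosing $p$ large and applying Young's inequality absorbs the $\varphi$-dependence into the dissipative term. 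This yields
\begin{equation*}
    \frac{d}{dt}\|\varphi_t\|_{L^p}^p \leqslant -c_1 \|\varphi_t\|_{L^p}^p + C(\|z_t\|_{C^1}),
\end{equation*}
and Gr\"onwall delivers the desired exponential decay of $\EE\|\varphi_t\|_{L^p}^p$ with the stated dependence on $\|u_0\|_{L^{\varrho}}$, once $\varrho \geqslant p$. To pass from $L^p$ to $L^\infty$ on the window $[t, t+1]$, I would restart the mild formulation at time $t$ and replay the local well-posedness argument with $L^p$-bounded input, using Gagliardo-Nirenberg interpolation and the BDG inequality for the stochastic part of $z$. The main obstacle is precisely this last bootstrap: the polynomial growth of $A$ combined with only $L^p$ (rather than $L^\infty$) control on $\varphi$ forces a delicate choice of $p$ and $\varrho$, together with an interpolation scheme that closes with exactly the exponential dependence $e^{-C_2 t}\|u_0\|_{L^{\varrho}} + 1$ claimed.
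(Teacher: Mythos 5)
Your overall architecture (Picard iteration, super-/sub-solutions in the coercive case, $L^p$ energy estimates plus a Schauder--Sobolev bootstrap in the polynomial case) matches the paper's, but two of the key steps contain genuine gaps.

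First, the super-solution ansatz $\varphi^{+}(t,x)=F(t)+G(x)$ with a \emph{time-independent} spatial profile cannot work. The coercive gain you extract is $\sum_{i}A_{i}'(\varphi^{+}+z)\,\partial_{i}G\gtrsim \alpha F\sum_{i}\partial_{i}G$, which is at best \emph{linear} in $F$, whereas the super-solution inequality requires it to dominate $-\dot F$; any $F$ with $F(0^{+})=+\infty$ (needed for uniformity over $u_{0}$) violates $\dot F\geqslant -cF+C$, since that ODE forces $F$ to stay bounded as $t\downarrow 0$. So the claimed reduction to $\dot F\geqslant -cF^{2}+\text{l.o.t.}$ is not available: a quadratic-in-$F$ gain requires the spatial gradient itself to be of size $1/t$. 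This is exactly why the paper takes the multiplicative profile $\varphi^{+}(t,x)=(a+b\sum_{i}x_{i})/t$, whose gradient $b/t$ makes the coercive term of order $\alpha(b/t)^{2}$, matching $\partial_{t}\varphi^{+}\sim -b/t^{2}$. In addition, your requirements that $G\geqslant 0$ and $G$ vanish on $\partial\mD$ force $\nabla G$ to change sign in $\mD$, so where $\partial_{i}G<0$ the term $A_{i}'(\varphi^{+}+z)\partial_{i}G$ is large and \emph{negative} (as $A_{i}'>0$ for large arguments) and nothing compensates it; a super-solution need not vanish on the boundary, only dominate $0$ there, and the paper's profile is strictly positive on $\overline{\mD}$.

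Second, in the case $\mf{a}<\infty$ you test the equation for $\varphi=u-z$, which destroys the exact cancellation that drives the paper's estimate. The paper applies the It\^o formula to $\|u\|_{L^{p}}^{p}$ directly and uses $\langle u^{p-1},\div(A(u))\rangle=0$ (a pure divergence vanishing by the Dirichlet boundary condition), so no error term involving the nonlinearity survives. Testing the $\varphi$-equation, the nonlinearity enters as $\div(A(\varphi+z))$ and only the $A(\varphi)$ part is conservative; the remainder, after integration by parts, is controlled by
\begin{align*}
(p-1)\int_{\mD}|\varphi|^{p-2}|\nabla\varphi|\,|A(\varphi+z)-A(\varphi)|\ud x
\lesssim_{z}\ \ve\int_{\mD}|\varphi|^{p-2}|\nabla\varphi|^{2}\ud x
+C_{\ve}\|\varphi\|_{L^{p-2+2\mf{a}}}^{\,p-2+2\mf{a}}\;,
\end{align*}
and the last term carries the power $p-2+2\mf{a}>p$ (for $\mf{a}>1$), not the power $\mf{a}$ you claim. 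It is therefore \emph{not} absorbed by the Poincar\'e dissipation $-c\|\varphi\|_{L^{p}}^{p}$ via Young's inequality, no matter how large $p$ is, since the excess $2\mf{a}-2$ is fixed. To close the $L^{p}$ estimate you should follow the paper and work with $u$ itself, where the nonlinear term vanishes identically and only the forcing and It\^o-correction terms need Young's inequality. The remaining ingredients (Poincar\'e for the dissipative term, BDG for the martingale, and the mild-formulation bootstrap to $L^{\infty}$) are in line with the paper's argument.
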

The main result of this work is then the following theorem.
\begin{theorem}\label{thm:synchro}
    For any $ u_{0}, v_{0} \in L^{\varrho}$, with $ \varrho (\mf{a},d) $ as in
    Proposition~\ref{prop:wp}, denote with $ u_{t} , v_t $ the 
    solution to \eqref{eqn:main} with initial datum $ u_{0},
    v_{0} $ respectively.
    Then there exists a constant $ C > 0 $ such that
    \begin{align*}
        \lim_{t \to \infty} \sup_{u_{0}^{1}, u_{0}^{2} \in
        L^{\infty}_{0}} \frac{1}{t} \log{ \left( \| u_{t} - v_{t}
        \|_{L^{1}} \right) } & \leqslant - C \;, \qquad & & \text{
        if } \mf{a} = \infty \;, \\
        \sup_{u_{0}^{1}, u_{0}^{2} \in
        L^{\varrho}_{0}} \lim_{t \to \infty}  \frac{1}{t} \log{ \left( \| u_{t} - v_{t}
        \|_{L^{1}} \right) } & \leqslant - C \;, \qquad & & \text{
        if } 1 \leqslant \mf{a} < \infty \;. 
    \end{align*}
\end{theorem}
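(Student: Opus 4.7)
The plan is to exploit the order-preserving $L^1$ contraction of Lemma~\ref{lem:l1-contraction} together with a quantitative strict contraction coming from mass dissipation at the Dirichlet boundary, and then to close with an ergodic/Lyapunov argument that differs between the two regimes of $\mf{a}$.

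\textbf{Step 1 (Reduction and linear difference equation).} First I would use Proposition~\ref{prop:wp} to reduce to the situation where, after one unit of time, $u_{1},v_{1}\in L^{\infty}(\mD)$, uniformly if $\mf{a}=\infty$ and with all moments finite if $\mf{a}<\infty$. Using the $L^{1}$ order preservation, $\|u_{t}-v_{t}\|_{L^{1}}\leqslant \|\bar u_{t}-\underline u_{t}\|_{L^{1}}$, where $\bar u,\underline u$ solve \eqref{eqn:main} from $u_{0}\vee v_{0}$ and $u_{0}\wedge v_{0}$. The central object is then $w_{t}=\bar u_{t}-\underline u_{t}\geqslant 0$, which vanishes on $\partial\mD$ and solves a linear advection--diffusion equation
\begin{equation*}
\partial_{t}w=\Delta w-\div(B_{t}\,w)\;,\qquad B_{t}=\int_{0}^{1}A^{\prime}\bigl(\theta\bar u_{t}+(1-\theta)\underline u_{t}\bigr)\,\ud\theta\;,
\end{equation*}
with Dirichlet zero boundary. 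The coefficient $B$ is random but bounded in $L^{\infty}$ by an explicit function of $\|\bar u\|_{\infty}, \|\underline u\|_{\infty}$.

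\textbf{Step 2 (Quantitative boundary contraction).} Next I would prove the main deterministic input: if $M:=\sup_{s\in[0,1]}\|B_{s}\|_{\infty}<\infty$, then
\begin{equation*}
\|w_{1}\|_{L^{1}}\leqslant \bigl(1-\eta(M)\bigr)\,\|w_{0}\|_{L^{1}}\;,
\end{equation*}
with $\eta(M)>0$ depending only on $\mD$ and $M$. The route is duality: write $\int_{\mD}w_{1}=\langle w_{0},\phi_{0}\rangle$, where $\phi$ solves the backwards adjoint problem $\partial_{s}\phi+\Delta\phi+B\cdot\nabla\phi=0$ on $[0,1]\times\mD$ with $\phi_{1}\equiv\mathbf{1}$ and $\phi\vert_{\partial\mD}=0$. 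The parabolic maximum principle gives $0\leqslant\phi_{0}\leqslant 1$, and a Feynman--Kac representation (or directly an exit-time estimate for the associated diffusion with drift $-B$) shows $\|\phi_{0}\|_{\infty}\leqslant 1-\eta(M)$, since the diffusion leaves the bounded domain $\mD$ within time $1$ with positive probability, uniformly bounded below in terms of $M$ and $\mathrm{diam}(\mD)$. This captures the dissipation at the boundary announced in the introduction.

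\textbf{Step 3 (Iteration and ergodic closure).} Finally I iterate over unit intervals and take limits. In the coercive case $\mf{a}=\infty$, Proposition~\ref{prop:wp} provides a \emph{deterministic} bound on $\EE\sup_{s\in[n,n+1]}\|u_{s}\|_{\infty}$ uniform in the initial condition, so $M_{n}$ is stochastically dominated and the contraction factor $1-\eta(M_{n})$ can be averaged into a strictly negative Lyapunov rate uniform over $u_{0},v_{0}\in L^{\infty}_{0}$, yielding the statement with $\lim$ outside the $\sup$. In the case $1\leqslant\mf{a}<\infty$, Proposition~\ref{prop:wp} only gives $\EE[\sup_{s\in[n,n+1]}\|u_{s}\|_{\infty}^{p}]<\infty$ stationarily (for $n$ large), so the random factors $1-\eta(M_{n})$ form a stationary ergodic sequence; by Birkhoff's ergodic theorem
\begin{equation*}
\frac{1}{n}\sum_{k=1}^{n}\log\bigl(1-\eta(M_{k})\bigr)\;\xrightarrow{n\to\infty}\;\EE\bigl[\log(1-\eta(M))\bigr]\;,
\end{equation*}
and this expectation is strictly negative because $\eta(M)>0$ a.s. and $M$ has all moments, keeping $\log(1-\eta(M))$ integrable and bounded above by $0$.

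\textbf{Main obstacle.} The delicate point is Step~2 combined with the closure in Step~3 when $\mf{a}<\infty$: I need the dependence $\eta(M)$ to degrade only mildly (at worst exponentially) in $M$, and simultaneously to control the heavy tails of $M_{n}$ via the $L^{p}$ bounds of Proposition~\ref{prop:wp}, so that $\EE|\log(1-\eta(M))|$ remains finite and the Birkhoff limit is strictly negative. This amounts to matching the quantitative form of the boundary dissipation estimate with the moment bounds yielded by the $L^{p}$ energy method underlying Proposition~\ref{prop:wp}.
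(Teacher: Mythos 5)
Your Steps 1--2 and the $\mf{a}=\infty$ part of Step 3 are essentially the paper's argument in adjoint clothing: your dual function satisfies $\phi_{0}(y)=\PP_{y}(\tau>1)$ for the diffusion $\ud X=B\,\ud s+\ud W$, so $1-\|\phi_{0}\|_{\infty}$ is exactly the paper's exit probability $\mathbf{p}_{t,t+1}=\inf_{y}\PP_{y}(X_{1\wedge\tau}\in\partial\mD)$ from Lemma~\ref{lem:contract}, and the quantitative lower bound ``uniform in $M$ and $\mathrm{diam}(\mD)$'' that you assert is what Lemma~\ref{lem:control} proves via Girsanov. Two small points there: the drift of the dual diffusion is $+B$, not $-B$ (harmless), and for the $\mf{a}=\infty$ closure you need the \emph{pathwise} bound $\sup_{u_{0}}\sup_{s\in[n,n+1]}\|u_{s}\|_{\infty}\leqslant\mf{c}(\vt^{n}\omega)$ of Corollary~\ref{cor:pathwise-bound} (a stationary sequence to which the ergodic theorem applies almost surely), not merely the bound in expectation you quote; the latter does not by itself give an a.s.\ Lyapunov exponent.

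The genuine gap is the closure for $1\leqslant\mf{a}<\infty$. You assert that the factors $1-\eta(M_{n})$, $M_{n}=\sup_{s\in[n,n+1]}\|B_{s}\|_{\infty}$, ``form a stationary ergodic sequence'' and invoke Birkhoff. They do not: the law of $(u_{t},v_{t})$ depends on the (arbitrary) initial data, there is no invariant measure in play, and proving convergence to a statistically stationary regime is precisely what the theorem is trying to establish, so the argument as written is circular. Moreover the obstacle is not integrability of $\log(1-\eta(M))$ (since $\eta(M)\approx c\,e^{-C(1+M^{2})}$, that logarithm is automatically bounded and negative); the obstacle is to show that a \emph{positive density} of unit time intervals carries an $L^{\infty}$ bound by a fixed deterministic constant, so that the product of contraction factors decays at a definite exponential rate. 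The paper supplies the missing idea in Section~\ref{sec:lyap}: one uses the $L^{p}$ energy estimate of Proposition~\ref{prop:wp} as a Lyapunov functional for the Markov pair $\mathbf{u}_{t}=(u_{t},v_{t})$ (this is where the assumption that $\psi_{0}$ is deterministic is used), defines excursion stopping times into and out of nested ``center'' sets in $L^{p}$ and $L^{\infty}$, proves exponential moments for the return times (Lemma~\ref{lem:moment-bound}) together with a parabolic-regularisation step upgrading $L^{p}$ smallness to $L^{\infty}$ smallness, and closes with a martingale law of large numbers (Proposition~\ref{prop:center-time}) rather than Birkhoff. Without some substitute for this excursion/return-time machinery, your Step~3 does not go through in the polynomial-growth case.
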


Although the statement of the theorem considers only synchronization in the $
L^{1} $ norm, which is the simplest possible choice, our result can be lifted to
higher regularity norms following similar arguments as in
\cite{Rosati22Synchro} or \cite{Rosati21Lyap}.

\section{A priori estimates}

This section is essentially devoted to a proof of Proposition~\ref{prop:wp},
plus a  corollary.
    Eventually we will require an estimate for $ \varphi $, but let us start
    with estimates on $ z $. Here we require parabolic regularity estimates for the
    Dirichlet heat semigroup $
    P_{t} $ on $ \mD $.
    Namely, we will use that  $ P_{t} $ is an analytic semigroup, cf.
    \cite[Chapters 2, 3]{Lunardi95Analytic}, so that for any $ 0 < \alpha \leqslant  \beta
    < \infty$ with $ \alpha, \beta \not\in \NN $ one can estimate, for some $ \lambda > 0$
    \begin{equation}\label{e:schauder}
        \sup_{t \geqslant 0} \left\{ t^{\frac{\beta - \alpha}{2}} e^{\lambda t} \| P_{t} f
        \|_{C^{\beta }( \overline{\mD})} \right\} \lesssim_{\alpha, \beta}  \|
            f \|_{\alpha} \;.
        \end{equation}
        This estimate leads us to the following uniform bound on $ z $.

    \begin{lemma}\label{lem:reg-z}
        Under Assumption~\ref{assu:noise}, the process $ z $ defined by
        \eqref{e:z} satisfies for some $ \gamma > 1, p > 0 $ and $ T > 0 $ the
        following moment bound 
    \begin{equation}\label{e:mmt-z}
        \sup_{t \geqslant 0}\EE \left[ \sup_{t \leqslant s \leqslant t+ T} \| z
        \|_{ C^{\gamma} ( \overline{\mD})}^{p} \right] \leqslant C(T, \gamma,
        p) \;,
    \end{equation}
    for some $ C(T, \gamma, p) \in (0, \infty) $.
    \end{lemma}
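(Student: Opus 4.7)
The plan is to decompose $z = z^{d} + z^{s}$, where
$$z^{d}_{t} = \int_{0}^{t} P_{t-s} \psi_{0}(s) \, \dd s, \qquad z^{s}_{t} = \sum_{k=1}^{n} \int_{0}^{t} P_{t-s} \psi_{k} \, \dd B^{k}_{s},$$
and treat each summand separately. The workhorse throughout is the Schauder bound \eqref{e:schauder} (together with its standard extension from $L^{\infty}$ to $C^{\beta}$ for $\beta \in (0,2) \setminus \NN$), whose exponential prefactor $e^{-\lambda t}$ encodes the spectral gap of the Dirichlet Laplacian and is the main reason for uniformity in $t$.

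For $z^{d}$, pick $\gamma \in (1, 2)$ with $\gamma \notin \NN$. Applying \eqref{e:schauder} pointwise under the integral gives $\| P_{t-s} \psi_{0}(s) \|_{C^{\gamma}} \lesssim (t-s)^{-\gamma/2} e^{-\lambda(t-s)} \| \psi_{0}(s) \|_{\infty}$, and Minkowski's integral inequality together with the moment bound \eqref{e:psi-0-mmt} yields
$$\bigl(\E \| z^{d}_{t} \|_{C^{\gamma}}^{p}\bigr)^{1/p} \leqslant \int_{0}^{t} (t-s)^{-\gamma/2} e^{-\lambda(t-s)} \bigl(\E \| \psi_{0}(s) \|_{\infty}^{p}\bigr)^{1/p} \, \dd s \leqslant C,$$
uniformly in $t \geqslant 0$, since $\gamma/2 < 1$ makes the kernel integrable. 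For $z^{s}$ I would use the Da Prato--Kwapie\'n--Zabczyk factorisation: for some $\alpha \in (0, 1/2)$ write $z^{s}_{t} = c_{\alpha} \int_{0}^{t} (t-s)^{\alpha - 1} P_{t-s} Y_{s} \, \dd s$ with $Y_{s} = \sum_{k} \int_{0}^{s} (s-r)^{-\alpha} P_{s-r} \psi_{k} \, \dd B^{k}_{r}$. Burkholder--Davis--Gundy applied at a fixed $x$, combined with \eqref{e:schauder} and the exponential decay, gives $\E | Y_{s}(x) |^{p} \leqslant C$ uniformly in $s, x$; upgrading to $\sup_{x} |Y_{s}(x)|$ is done by Kolmogorov's criterion in the space variable (using spatial H\"older estimates on $Y_{s}$ coming from \eqref{e:schauder} applied with $\beta > 0$), and a final Schauder/Minkowski step gives $\sup_{t} (\E \| z^{s}_{t} \|_{C^{\gamma}}^{p})^{1/p} \leqslant C$ for $\gamma$ slightly above $1$ and $p$ sufficiently large.

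To promote these fixed-time moment bounds to the sup-in-time estimate in \eqref{e:mmt-z}, I would establish a uniform time-H\"older modulus: using the mild form on the window $[t, t+T]$ one writes $z_{s_{2}} - z_{s_{1}} = (P_{s_{2} - s_{1}} - \id) z_{s_{1}} + \int_{s_{1}}^{s_{2}} P_{s_{2} - r}(\psi_{0} \dd r + \sum_{k} \psi_{k} \dd B^{k}_{r})$ and estimates both terms in a slightly weaker norm $C^{\gamma'}$, $\gamma' < \gamma$, via \eqref{e:schauder} and BDG. This yields $\E \| z_{s_{2}} - z_{s_{1}} \|_{C^{\gamma'}}^{p} \lesssim | s_{2} - s_{1} |^{p \kappa}$ for some $\kappa > 0$, uniformly in $t$. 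Kolmogorov's continuity criterion then exchanges $\sup$ and $\E$, and the resulting constant depends only on $T$, $\gamma$, $p$. The main obstacle is bookkeeping this uniformity in $t$: it relies on time-homogeneity of \eqref{e:z} and the fact that \eqref{e:schauder} incorporates the spectral gap, so all estimates depend on time differences rather than absolute times. Choosing $\gamma \in (1, 2)$ with some slack above $1$ also requires taking $p$ large enough for Kolmogorov to close the gap between $\gamma'$ and $\gamma$.
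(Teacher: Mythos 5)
Your proposal is correct in substance and follows the same overall strategy as the paper's proof: both rest on the Schauder estimate \eqref{e:schauder}, whose exponential prefactor supplies the uniformity in $t$, on BDG for the stochastic convolution, and on Kolmogorov-type arguments to pass from pointwise moments to sup-norms. The one genuine difference is your treatment of the stochastic part: the paper applies BDG directly to the spatial increments $\nabla z^{k}_{t}(x)-\nabla z^{k}_{t}(y)$, extracts a factor $|x-y|^{p(\gamma-1)}$ from $\psi_{k}\in C^{\alpha_{0}}$, and invokes Kolmogorov in space (handling the time regularity only by the remark that it ``follows along similar lines''), whereas you use the Da Prato--Kwapie\'n--Zabczyk factorisation. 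Your route has the advantage of delivering the supremum over the window $[t,t+T]$ essentially for free, since after factorisation the singular kernel is deterministic and the supremum can be taken pathwise before taking moments; this makes the temporal step, which the paper leaves implicit, fully explicit. One exponent-bookkeeping point you should make precise: in the final Schauder/Minkowski step you cannot feed only $\sup_{x}|Y_{s}(x)|$ into \eqref{e:schauder}, because $\|P_{t-s}Y_{s}\|_{C^{\gamma}}\lesssim (t-s)^{-\gamma/2}\,e^{-\lambda(t-s)}\|Y_{s}\|_{\infty}$ produces the kernel $(t-s)^{\alpha-1-\gamma/2}$, which is not integrable since $\alpha<1/2<\gamma/2$. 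You must instead retain a spatial H\"older bound $\EE\|Y_{s}\|_{C^{\beta}}^{p}\leqslant C$ for some $\beta\in(\gamma-2\alpha,\alpha_{0}]$ --- available by the same BDG-plus-Kolmogorov argument you already sketch, and a nonempty range precisely when $\gamma<2\alpha+\alpha_{0}$, consistent with the paper's constraint $\gamma<1+\alpha_{0}$ --- so that the kernel becomes $(t-s)^{\alpha-1-(\gamma-\beta)/2}$ and is integrable. With that correction your argument closes.
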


    \begin{proof}
        Let us rewrite (omitting the dependence on $ \omega $)
        \begin{align*}
            z (t, x) = \int_{0}^{t}[ P_{t -s} \psi_{0}(s, \cdot)] (x) \ud s + \sum_{k
            =1}^{n} \int_{0}^{t} [P_{t-s} \psi_{k}] (x) \ud B^{k}_{s}.
        \end{align*}
        Since the first term is simpler than the latter terms, let us just treat
        one of the last addends.  Define $ z^{k} (x) := \int_{0}^{t} [P_{t -s}
        \psi_{k}] (x) \ud s $ and for $ \alpha_0 $ as in
        Assumption~\ref{assu:noise},  fix any $ 1 < \gamma < 1 +
        \alpha_{0} $. Then
        for $ p \geqslant 2$ by BDG we have for any $ x, y \in \mathcal{D} $
        \begin{align*}
            \EE \left[ | \nabla z^{k}_{t} (x) - \nabla z^{k}_{t} (y) |^{p} \right] & \lesssim
            \bigg( \int_{0}^{t} | \nabla P_{t -s} \psi_{k} (x) -  \nabla P_{t - s} \psi_{k} (y)
            |^{2}  \ud s \bigg)^{\frac{p}{2}} \\
            & \lesssim | x - y |^{p(\gamma-1)} \int_{0}^{t} \| P_{t - s}
            z^{k} \|^{2}_{C^{\gamma}( \overline{\mD})} \ud s \\
            & \lesssim | x - y |^{p (\gamma - 1)} \int_{0}^{t} e^{- 2 \lambda(t -s)} t
            ^{- \frac{\gamma - \alpha_0}{ 2}} \ud s \lesssim | x - y |^{p
            (\gamma -1)}\;,
        \end{align*}
        so that by \eqref{e:schauder}, by our continuity assumption on $ \psi_{k}
        $ in Assumption~\ref{assu:noise}, and by the Kolmogorov continuity test we
        obtain for any $ \gamma^{\prime}  < \gamma $
        \begin{align*}
            \sup_{t \geqslant 0}\EE \| z (t, \cdot) \|_{C^{\gamma^{\prime} } (\mD)}^{p} < \infty \;.
        \end{align*}
        From here \eqref{e:mmt-z} follows along similar lines to
        those we have just sketched to obtain some temporal regularity, which
        eventually guarantees the uniform estimate in time.
    \end{proof}

    Now we are ready to prove the uniform bounds in Proposition~\ref{prop:wp}.

\begin{proof}[Proof of Proposition~\ref{prop:wp}]

    \textit{Case $ \mf{a} = \infty $.} As for the well-posedness of the equation for initial data  $ u_{0} \in L^{\infty}
    $, we will consider mild solutions in the sense of the Definition \ref{def:weak-solutions}.
    Here through a classical Picard contraction argument one obtains that for
    any $ M > 0 $ and almost all $ \omega \in \Omega $ there exists a $
    T^{\mathrm{fin}}(M, \omega) > 0 $ such that
    for all $u_{0} \in L^{\infty}$ satisfying $\| u_{0} \|_{\infty} \leqslant
    M$, there exists a unique mild solution  $\varphi$ to
    \eqref{eqn:for-w} on $[0, T^{\mathrm{fin}}(M, \omega))$.

    To construct global solutions in time we need an a-priori $ L^{\infty} $ estimate.
    For this reason define
    \begin{align*}
        \varphi^{+}(t, x) = \frac{a + b\sum_{i =1}^{d} x_{i}}{ t}\;.
    \end{align*}
    Let us fix an arbitrary $ T > 0 $. We will then find $ a(T, \omega), b(T,
    \omega) > 0 $ such that $ \varphi^{+} $ is a
    super-solution to \eqref{eqn:for-w} on $ (0, T] $ for any initial
    condition  $ u_{0} \in L^{\infty} $, in the sense that if $ \varphi $ is
    the solution to \eqref{eqn:for-w} with initial condition $ u_{0} $, then
    \begin{align*}
        \varphi (t, x) \leqslant \varphi^{+}(t, x) \;, \qquad \forall t \in
        (0, T^{\mathrm{fin}}( \| u_{0} \|_{\infty}, \omega) \wedge T)\;, x \in \mD \;.
    \end{align*}
    To prove that $ \varphi^{+} $ is a super-solution we have to fix the
    parameters $ a , b $. We start by defining
    \begin{align*}
        a = b( 1 + D + T \| z \|_{\infty, T} + T \beta / \alpha ) \;,
    \end{align*}
    where $ \alpha, \beta $ are as in the coercivity requirement of Assumption~\ref{assu:noise} and where
    \begin{align*}
        D = \max_{x \in \overline{\mD}} \sum_{i=1}^{d}| x_{i} | \;, \qquad \|
        z \|_{\infty, T} = \sup_{0 \leqslant t \leqslant T} \| z_{t} \|_{\infty}
        \;.
    \end{align*}
    Such choice of $ a $ will be motivated by the upcoming calculations.
    Indeed, we find that
    \begin{align*}
        \partial_{t} \varphi^{+} + \mathrm{div} (A(\varphi^{+}+ z)) - \Delta \varphi^{+} & = -
        \frac{\varphi^{+}}{t} + \sum_{i=1}^{d}  A_{i}^{\prime}  (\varphi^{+} + z)
        \partial_{i} ( \varphi^{+} + z) \\
        & = -
        \frac{\varphi^{+}}{t} + \sum_{i=1}^{d}  A_{i}^{\prime}  (\varphi^{+} + z) \left[
        \frac{b}{t} + \partial_{i} z\right] \;.
    \end{align*}
    Our aim is then to show that if $ b > 1 $ is chosen sufficiently large, then
    \begin{equation}\label{eqn:aim}
         \sum_{i=1}^{d}  A_{i}^{\prime}  (\varphi^{+} + z) \left[
         \frac{b}{t} + \partial_{i} z\right] \geqslant \frac{\varphi^{+}}{t} \;.
    \end{equation}
    Let us observe that by construction, since $ b \geqslant 1 $
    \begin{equation}\label{eqn:bds-w-+}
        \frac{\beta}{\alpha} + b t^{-1} - z \leqslant \varphi^{+} \leqslant b
        t^{-1}(1 + 2 D + T \| z \|_{\infty, T}+ T \beta / \alpha) \;.
    \end{equation}
    In particular it suffices to prove
    \begin{align*}
        \sum_{i=1}^{d}  A_{i}^{\prime}  (\varphi^{+} + z) \left[
        \frac{b}{t} + \partial_{i} z\right] \geqslant \frac{1 + 2 D + T\| z
        \|_{\infty, T}+ T \beta / \alpha}{b}  \left(
        \frac{b}{t}\right)^{2} \;.
    \end{align*}
    Using $ \varphi^{+} + z \geqslant \beta / \alpha $, which in turn, by our
        component-wise coercivity requirement in Assumption~\ref{assu:noise}, implies $
        A^{\prime}_{i} (\varphi^{+} + z) \geqslant 0 $, we can bound
    \begin{align*}
        \sum_{i=1}^{d}  A_{i}^{\prime}  (\varphi^{+} + z) \left[
        \frac{b}{t} + \partial_{i} z\right] & \geqslant \sum_{i=1}^{d}  A_{i}^{\prime}  (\varphi^{+} + z) \left[
        \frac{b}{t} - \| \partial_{i} z \|_{\infty, T}\right] \geqslant \frac{1}{2} \frac{b}{t} \sum_{i=1}^{d}  A_{i}^{\prime}
        (\varphi^{+} + z) \;,
    \end{align*}
    if we assume that 
    \begin{equation}\label{eqn:b-1}
        b t^{-1} \geqslant 2 \sum_{i=1}^{d} \| \partial_{i} z
        \|_{\infty, T} \;.
    \end{equation}
    Note that the lower bound in \eqref{eqn:b-1} is finite almost surely, since by
    Lemma~\ref{lem:reg-z} we have $ z \in C^{\gamma} $ for some $ \gamma > 1 $. 
    In particular, under this condition and using the coercivity assumption on
    $ A $ in Assumption~\ref{assu:noise} we can
    further reduce the problem to proving
    \begin{align*}
        \frac{1}{2} \frac{b}{t}\sum_{i=1}^{d}\left( \alpha (\varphi^{+} +z)  - \beta 
        \right) \geqslant \frac{1 + 2 D + T\| z \|_{\infty, T} + T \beta /
        \alpha}{b}  \left( \frac{b}{t}\right)^{2} \;,
    \end{align*}
    and again via \eqref{eqn:bds-w-+} we reduce this to
    \begin{align*}
        \frac{1}{2} \frac{b}{t}\sum_{i=1}^{d} \alpha \frac{b}{t}  \geqslant
        \frac{1 + 2 D + T\| z \|_{\infty, T}+ T \beta / \alpha}{b}  \left(
        \frac{b}{t}\right)^{2} \;,
    \end{align*}
    which is satisfied for
    \begin{equation}\label{eqn:b-2}
        b \geqslant 2 \frac{1 + 2 D + T\| z \|_{\infty, T}+ T \beta / \alpha}{\alpha d} \;.
    \end{equation}
    Combining \eqref{eqn:b-1} and \eqref{eqn:b-2}, let us define
    \begin{align*}
        b := \max \left\{  2 \frac{1 + 2 D + T\| z \|_{\infty, T}+T \beta/
            \alpha}{\alpha d} \;,
        2 \sum_{i=1}^{d} \| \partial_{i} z \|_{\infty, T} \right\} \;.
    \end{align*}
    Then we have proven that $ \varphi^{+} $ is a super-solution. Analogously one can
    construct sub-solutions.

    \textit{Case $ \mf{a} \in [1, \infty) $.} In this case we refrain from
    proving well-posedness of the equation, as this is already
    well understood, see for example \cite[Theorem
    2.1]{Gyiongy00Wellposed}. Instead we concentrate on the uniform bounds and start by
    establishing an $ L^{p} $ energy estimate. We can compute, assuming for simplicity $ p \in 2 \NN \setminus \{
    0 \}$:
\begin{equation}\label{eqn:Lp-estimate}
    \begin{aligned}
        \ud \| u \|_{L^{p}}^{p} = & p \langle u^{p -1}, \Delta u \ud t -
        \mathrm{div}(A (u)) \ud t + \psi_{0} \ud t + \sum_{k = 1}^{n} \psi_{k}
        \ud B_{t}^{k} \rangle \\
        & + \sum_{k =1}^{n}\frac{p(p-1)}{2} \langle u^{p-2},  \psi_{k}^{2} \rangle \ud t \;.
    \end{aligned}
\end{equation}
    In the spirit of \cite{Gyiongy00Wellposed}, the core of the estimate lies in the cancellation
    \begin{equation}\label{e:cancellation}
        \langle u^{p-1}, \mathrm{div}(A(u))\rangle = 0 \;.
    \end{equation}
    In fact, one
    can rewrite by integration by parts
    \begin{align*}
        \langle u^{p-1}, \mathrm{div} (A(u)) \rangle &  = -\sum_{i = 1}^{d} \int_{\mD}
        \partial_{x_{i}} H_{i}(u) (x) \ud x + c\;, \\
        c & = \int_{\partial \mD} \mathbf{n} (x) \cdot (u^{p-1} A
        (u))(x) \ud \Sigma (x) = 0 \;,
    \end{align*}
    where $ \mathbf{n} (x) $ is the outer unit normal to the boundary $
    \partial \mD $ at $ x \in \partial \mD $, $ \Sigma $ is the $ (d-1)
    $--dimensional Hausdorff measure on the boundary and $
    H_{i} \colon \RR \to \RR $ is defined as the primitive
    \begin{align*}
        H_{i} (a) = \int_{0}^{a}  (p - 1) r^{p - 2} A_{i} (r) \ud r \;.
    \end{align*}
    Then, by the divergence theorem
    \begin{align*}
        \sum_{i = 1}^{d} \int_{\mD} \partial_{x_{i}} H_{i}(u) (x) \ud x =
        \int_{\partial \mD} \mathbf{n} (x) \cdot H (u) (x) \ud \Sigma (x)
        =0 \;.
    \end{align*}
    Therefore \eqref{e:cancellation} is proven.
    Moreover, we have that by the Poincar\'e's inequality
    \begin{align*}
        \frac{1}{p-1} \langle  u^{p -1}, \Delta u\rangle & = - \int_\mD (u)^{p-2} | \nabla u |^{2}
        \ud x = -\| u^{p/2-1}\nabla u \|_{L^{2}}^{2}  = - \frac{2}{p} \| \nabla
        (u^{\frac{p}{2}}) \|_{L^{2}}^{2} \\
        & \lesssim_{p} -\| u ^{\frac{p}{2}} \|_{L^{2}}^{2} = - \| u \|_{L^{p}}^{p} \;.
    \end{align*}
    Finally, for any $ \ve \in (0, 1) $ we can bound by Young's inequality for
    products
    \begin{align*}
        \langle u^{p - 1}, \psi_{0}  \rangle & \leqslant \| u \|_{L^{p
        -1}}^{p -1} \| \psi_{0} \|_{\infty} \leqslant \frac{p-1}{p} \ve^{\frac{p}{p-1}} \| u
        \|_{L^{p-1}}^{p} + \frac{1}{p} \ve^{- p} \| \psi_{0} \|_{\infty}^{p} \\
        \langle u^{p -2}, \sum_{k =1}^{n} \psi_{k}^{2} \rangle & \leqslant \|
        u \|_{L^{p-2}}^{p -2} \left\| \sum_{k=1}^{n} \psi_{k}^{2} \right\|_{\infty}
        \leqslant \frac{p -2}{p} \ve^{\frac{p}{p-2}} \| u \|_{L^{p
        -2}}^{p} + \frac{2}{p} \ve^{- \frac{p}{2}} \left\| \sum_{k =1}^{n} \psi_{k}^{2}
        \right\|_{\infty}^{\frac{p}{2}} \;.
    \end{align*}
    In this way, denoting with $ M^{(p)}_{t} $ the martingale
    \begin{equation}\label{eqn:martingale}
        \begin{aligned}
            M_{t}^{(p)} = \sum_{k =1}^{n} \int_{0}^{t} p\langle u^{p-1}_{s} ,
            \psi_{k}  \rangle \ud B_{t}^{k} \;,
        \end{aligned}
    \end{equation}
    we have obtained the overall estimate by choosing $ \ve $ sufficiently
    small and up to decreasing the value of the constant $ c > 0 $:
    \begin{equation}\label{eqn:lp-final}
        \begin{aligned}
            \ud \| u \|_{L^{p}}^{p} \lesssim_{p, \psi} \left\{  - c \| u
            \|_{L^{p}}^{p} + 1 + \| \psi_{0}(t , \cdot) \|_{\infty}^{p} \right\} \ud t + \ud
        M^{(p)}_{t} \;.
        \end{aligned}
    \end{equation}
    Moreover, by the BDG inequality, we can control the martingale term $
    M_{t}^{(p)} $ as follows, for any $ t , h \geqslant 0 $:
    \begin{align*}
        \EE \left[ \sup_{t \leqslant s \leqslant t+h} \left(M_{s}^{(p)} -
        M_{t}^{(p)} \right) \right] & \lesssim \EE
        \left[ \left( \int_{t}^{t+ h} \ud \langle M^{(p)} \rangle_{s}
        \right)^{\frac{1}{2}} \right] \\
        & = \EE \left[
        \left( \sum_{k =1}^{n}\int_{t}^{t+h} p^{2} \langle u^{p-1}_{s},
        \psi_{k} \rangle^{2} \ud s \right)^{\frac{1}{2}} \right]\\
        & \lesssim_{p, \psi} \sqrt{h} \EE \left[ \sup_{t \leqslant s
        \leqslant t+h} \| u_{s} \|_{L^{p-1}}^{p-1}\right] \;.
    \end{align*}
    Now we are ready to close our estimates. From \eqref{eqn:lp-final},
    together with the moment bound on $ \| \psi_{0}(t, \cdot) \| $ in
    Assumption~\ref{assu:noise}, we obtain for some $ c_{1}, c_{2} > 0 $
    \begin{equation}\label{eqn:a-priori-lp}
        \begin{aligned}
            \EE \| u_{t} \|_{L^{p}}^{p} \leqslant e^{ -c_{1} t} \EE \|
            u_{0} \|_{L^{p}}^{p} + c_{2} \;.
        \end{aligned}
    \end{equation}
    Furthermore, again from \eqref{eqn:lp-final}, we have for any $ h \in (0, 1) $ 
    \begin{align*}
        \EE \left[ \sup_{t \leqslant s \leqslant t+h} \| u_{s}
        \|_{L^{p}}^{p} \right] & \leqslant \EE [ \| u_{t} \|_{L^{p}}^{p}] +
        C_{1} \EE \left[ \int_{t}^{t+h}(1 + \| \psi_{0}(s, \cdot) \|^{p}_{\infty}) \ud s
        \right] + C_{2} \sqrt{h} \EE \left[ \sup_{t \leqslant s
        \leqslant t+h} \| u_{s} \|_{L^{p-1}}^{p-1}\right] \\
        & \lesssim \EE [ \| u_{t} \|_{L^{p}}^{p}] + \sqrt{h} \left( 1 +\EE \left[ \sup_{t \leqslant s
        \leqslant t+h} \| u_{s} \|_{L^{p-1}}^{p-1}\right] \right)  \;,
    \end{align*}
    so that by a Gronwall-type argument, choosing $ h $ sufficiently small and
    iterating the bound, we deduce that
    \begin{equation}\label{eqn:final-lp}
        \begin{aligned}
            \EE \left[ \sup_{t \leqslant s \leqslant t+1} \| u_{s}
        \|_{L^{p}}^{p} \right] \lesssim \EE [ \| u_{t} \|_{L^{p}}^{p}] \lesssim  e^{ -c_{1} t} \EE \|
        u_{0} \|_{L^{p}}^{p} + 1 \;,
        \end{aligned}
    \end{equation}
    where in the last estimate we used \eqref{eqn:a-priori-lp}. This concludes
    the proof of the a-priori $ L^{p} $ estimate: note that if we could have
    chosen $ p = \infty $ the proposition would be proven.  

    Instead, to obtain the $ L^{\infty} $ estimate we simply bootstrap our
    argument using the Schauder estimates as in \eqref{e:schauder}.
    Indeed, for any $ t, h \geqslant 0 $ we represent the solution $ u_{t + h} $ by
    \begin{align*}
        u_{t + h} = P_{h} u_{t} + \int_{0}^{h} P_{h - r} \left[ \div(A
        (u_{t+r})) + \psi_{0}  \right] \ud r + \sum_{k=1}^{n} \int_{t}^{t+h} P_{t+h -r}
        [\psi_{k}] \ud B^{k}_{r} \;.
    \end{align*}
    Now recall the following Schauder estimates and Sobolev embeddings for $
\alpha \geqslant 0, \alpha \geqslant \beta $ and $ \kappa >0 $:
    \begin{equation}\label{e:sobolev}
\begin{aligned}
        \| P_{t} \varphi \|_{W^{\alpha, p}} & \lesssim t^{- \frac{\alpha}{2}
        } \| \varphi \|_{L^{p}} \;, \quad  \| P_{t} \varphi \|_{W^{\alpha, p}}
\lesssim t^{- \frac{\alpha - \beta}{2}
        } \| \varphi \|_{W^{\beta, p}} \;,\\
\| \varphi \|_{L^{\infty}} & \lesssim \| \varphi \|_{W^{\frac{d}{p} + \kappa ,
p}} \;.
\end{aligned}
    \end{equation}
    Then for \( p \geqslant 1 \) and $ \kappa > 0 $ (to be chosen respectively
    sufficiently large and sufficiently small later on) we find, for $ h > 0 $
    \begin{align*}
        \| u_{t + h} \|_{\infty} \lesssim h^{- \frac{1}{2} (\frac{d}{p}+ \kappa)} \| 
        u_{t} \|_{L^{p}} + \int_{0}^{h} \| P_{h - r} \left[ \div (A
        (u_{t+r})) \right] \|_{W^{\frac{d}{p} + \kappa, p}}  \ud r + \|
        z_{t, h} \|_{L^{\infty}} \;,
    \end{align*}
    where $ z_{t, h} = \int_{t}^{t + h} P_{t +h - r } [\psi_{0}(r, \cdot )] \ud r +
    \int_{t}^{t + h} P_{t+ h-r} [\psi_{k}] \ud
    B^{k}_{r} $. As for the second term, we estimate
    \begin{align*}
        \int_{0}^{h} \| P_{h - r} \left[ \div (A
        (u_{t+r})) \right] \|_{W^{\frac{d}{p} + \kappa, p}}  \ud r \lesssim
        \int_{0}^{h}(h -r)^{- \frac{1}{2} (1 + \frac{d}{p} + 2 \kappa)} \|
        \div(A (u_{t + r})) \|_{W^{-1 - \kappa, p}} \ud r \;.
    \end{align*} 
    Hence, assuming that $ p $ is chosen sufficiently large and $ \kappa $
    sufficiently small, so that
    \begin{align*}
        \gamma \stackrel{\mathrm{def}}{=} 1 + \frac{d}{p} + 2 \kappa  < 2 \;,
    \end{align*}
    and estimating
    \begin{align*}
        \| \div(A (u_{t + r})) \|_{W^{- 1 - \kappa, p}} \lesssim \|
        A(u_{t + r}) \|_{L^{p}} \lesssim \| u_{t + r}
        \|_{L^{\mf{a}p}}^{\mf{a}} \;,
    \end{align*}
    we obtain
    \begin{equation}\label{eqn:final-l-inf}
        \begin{aligned}
            \sup_{0 \leqslant s \leqslant h} s^{\gamma /2} \| u_{t + s} \|_{\infty} \lesssim  \| 
        u_{t} \|_{L^{p}} + h \sup_{0 \leqslant s \leqslant h } \| u_{t + s}
        \|_{L^{\mf{a} p}}^{\mf{a}} + \sup_{0 \leqslant s \leqslant h} \|
        z_{t, s} \|_{\infty} \;.
        \end{aligned}
    \end{equation}
    Combining \eqref{eqn:final-lp} with \eqref{eqn:final-l-inf},
    together with our estimates on $ z_{t, s} $ from Lemma~\ref{lem:reg-z} we can
    conclude.

\end{proof}

As a by-product of the proof of
Proposition~\ref{prop:wp} we have actually proven the following statement.
\begin{corollary}\label{cor:pathwise-bound}
    Assume that $ \mf{a} = \infty $. For almost every $ \omega \in \Omega $ there exists a constant $
    \mf{ c} (\omega) \in (0, \infty) $ such that
    \begin{align*}
        \sup_{u_{0} \in L^{\infty}} \sup_{1 \leqslant s \leqslant 2} \|
        u_{s}(\omega) \|_{\infty} < \mf{c}(\omega) \;.
    \end{align*}
\end{corollary}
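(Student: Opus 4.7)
The plan is to observe that the super- and sub-solution construction used in the proof of Proposition~\ref{prop:wp} for the case $\mf{a} = \infty$ is entirely pathwise and uniform over $u_0 \in L^\infty$. Concretely, fix $T = 2$ and recall the super-solution
\begin{align*}
\varphi^+(t, x) = \frac{a(\omega) + b(\omega) \sum_{i=1}^{d} x_i}{t},
\end{align*}
where $a(\omega)$ and $b(\omega)$ are determined by $D$, $T$, the coercivity constants $\alpha, \beta$, and the quantities $\|z(\omega)\|_{\infty, T}$ and $\sum_{i} \|\partial_i z(\omega)\|_{\infty, T}$. The key point is that these parameters do not depend on the initial condition $u_0$. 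By Lemma~\ref{lem:reg-z}, for almost every $\omega$ we have $z(\omega) \in C^\gamma([0,T] \times \overline{\mD})$ for some $\gamma > 1$, so both quantities are almost surely finite.

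First I would invoke the super-solution comparison principle from the proof of Proposition~\ref{prop:wp} to conclude that for almost every $\omega$ and every $u_0 \in L^\infty$, the solution $\varphi$ to \eqref{eqn:for-w} satisfies
\begin{align*}
\varphi(s, x; \omega) \leqslant \varphi^+(s, x; \omega), \qquad \forall s \in (0, 2],\ x \in \mD,
\end{align*}
uniformly in $u_0$. Next, I would construct the symmetric sub-solution $\varphi^-(s, x; \omega) = -\varphi^+(s, x; \omega)$ (or rather the analogous object built with the same method, reversing signs) to obtain the matching lower bound $\varphi(s, x; \omega) \geqslant \varphi^-(s, x; \omega)$.

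Combining these bounds, for $s \in [1, 2]$ we obtain
\begin{align*}
\|\varphi_s(\omega)\|_\infty \leqslant a(\omega) + b(\omega) D,
\end{align*}
independently of $u_0 \in L^\infty$. Since $u = \varphi + z$, we deduce
\begin{align*}
\sup_{u_0 \in L^\infty} \sup_{1 \leqslant s \leqslant 2} \|u_s(\omega)\|_\infty \leqslant a(\omega) + b(\omega) D + \|z(\omega)\|_{\infty, 2} =: \mf{c}(\omega),
\end{align*}
which is finite for almost every $\omega$ and is the desired pathwise bound. No extra obstacle arises, since everything was already pathwise in the proof of Proposition~\ref{prop:wp}; the only step requiring care is to note that the bound on $b(\omega)$ (which is determined by \eqref{eqn:b-1} and \eqref{eqn:b-2}) is finite almost surely thanks to Lemma~\ref{lem:reg-z}, and that neither $a(\omega)$ nor $b(\omega)$ depend on $u_0$.
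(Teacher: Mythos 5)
Your proposal is correct and is exactly the argument the paper intends: the corollary is stated as a by-product of the proof of Proposition~\ref{prop:wp}, and your write-up simply makes explicit that the super- and sub-solution parameters $a(\omega), b(\omega)$ depend only on $\omega$ (through $\|z\|_{\infty,T}$ and $\|\nabla z\|_{\infty,T}$, finite a.s.\ by Lemma~\ref{lem:reg-z}) and not on $u_0$, yielding the uniform pathwise bound. No gaps.
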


\section{Synchronisation}

Throughout this section we consider, for two fixed initial conditions $
u_{0}, v_{0} \in L^{\varrho} $, where $ \varrho (d,\mf{a}) \in [\mf{ a}, \infty]$ is chosen as in
Proposition~\ref{prop:wp}, the difference
\begin{align*}
    w(t,x) = u(t, x) - v(t,x) \;, \quad \forall (t, x) \in [0, \infty)
    \times \mD \;,
\end{align*}
where $ u_{t}, v_{t} $ are the solutions to \eqref{eqn:main} with
respectively initial conditions $ u_{0}, v_{0} $. Then \( w \) solves
\begin{equation}\label{eqn:w}
\partial_{t} w  + \div(B(u, v) w) - \Delta w=0\;, \qquad w
\vert_{(0, \infty) \times \partial \mD} = 0 \;,
\end{equation}
with initial condition $ w_{0} = 0 $ and where
\begin{align*}
     B(u, v) = \frac{A(u) - A(v)}{u - v}\;, \qquad B \in C(\RR^{2} ;
     \RR^{d})\;.
\end{align*}
Here the fact that $ B \in C(\RR^{2}; \RR^{d}) $ follows from the requirement
that $ A \in C^{1} $ in Assumption~\ref{assu:noise}. It will be convenient to
consider solutions to \eqref{eqn:w} for general initial conditions $
w_{0} $. In this case we write
\begin{align*}
    (t, x) \mapsto \Phi (u, v, w_{0}; t,x)
\end{align*}
for the solution to \eqref{eqn:w} with initial condition $ w_{0} \in
L^{1} $ and driven
by paths $ u, v \in C([0, \infty) ; L^{\varrho}) $, with $ \varrho \geqslant \varrho
(\mf{a}, d) $, the latter as in Proposition~\ref{prop:wp}.

\begin{lemma}\label{lem:wp-w}
	There exists a $ \overline{\varrho} (\mf{a}, d) \geqslant
\varrho(\mf{a}, d)  $ (the latter as in Proposition~\ref{prop:wp}), such that
for every $ u, v \in C([0, \infty) ; L^{ \overline{\varrho}(\mf{a}, d)}) $ and
every $ w_{0} \in L^{1} $ there exists a unique solution $ w$ to
\eqref{eqn:w}, which can in addition be repsented by
\begin{align*}
w(t, x) = \int_{\mD} \Gamma (u,v;t, x, y) w_{0}(y) \ud y \;, \qquad \forall t
>0\;.
\end{align*}
where $ \Gamma (u, v ; \cdot) \in C( (0, \infty) \times \mD \times \mD ) $.
\end{lemma}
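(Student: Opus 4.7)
The plan is to construct the mild solution of \eqref{eqn:w} by a Picard fixed point argument, and then to extract the kernel $\Gamma$ through a duality argument combined with classical interior parabolic regularity.

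First I would choose $\bar\varrho(\mf{a}, d)$ so that the drift
$$
B(u_s, v_s)(x) = \frac{A(u_s(x)) - A(v_s(x))}{u_s(x) - v_s(x)}
$$
lies in a workable Lebesgue space. By the mean value theorem and Assumption~\ref{assu:noise}, $|B(u, v)| \lesssim (1 + |u| + |v|)^{\mf{a}}$ when $\mf{a} < \infty$, which belongs to $L^p(\mD)$ as soon as $u, v \in L^{\mf{a} p}(\mD)$. Fixing some $p > d$ and setting $\bar\varrho(\mf{a}, d) = \max\{\varrho(\mf{a}, d), \mf{a} p\}$ (with $\bar\varrho = \infty$ if $\mf{a} = \infty$, in which case $B$ is locally bounded on $\overline{\mD}$) ensures $B(u, v) \in L^\infty_{\mathrm{loc}}([0, \infty); L^p(\mD))$ with $p > d$ throughout.

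Second, I would obtain existence and uniqueness in the mild formulation
$$
w_t = P_t w_0 - \int_0^t P_{t-s} \, \div\!\big( B(u_s, v_s)\, w_s \big) \, \ud s.
$$
The Schauder/Sobolev estimates in \eqref{e:sobolev} combined with H\"older's inequality give, for $1/q^\prime = 1/p + 1/q$, the bound $\| P_{t-s} \div (B_s w_s) \|_{L^q} \lesssim (t-s)^{- \frac{1}{2} - \frac{d}{2p}} \| B_s \|_{L^p} \| w_s \|_{L^q}$, whose temporal singularity is integrable as soon as $p > d$. Thus the map $w \mapsto -\int_0^{\cdot} P_{\cdot - s} \div(B w_s) \, \ud s$ is a contraction on $C([0, T]; L^q(\mD))$ for $T$ small, and linearity extends the local solution globally in time. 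The $L^1$-contraction inherited from the structure of \eqref{eqn:main} (cf.\ Lemma~\ref{lem:l1-contraction}) then allows the result to be extended to arbitrary $w_0 \in L^1$ by density.

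Third, for the kernel representation I would use duality. For any $t > 0$ and $\phi \in C_c^\infty(\mD)$, the backward problem
$$
-\partial_s \phi_s = \Delta \phi_s + B(u_s, v_s) \cdot \nabla \phi_s, \qquad \phi_s \big\vert_{\partial \mD} = 0, \quad \phi_t = \phi,
$$
is well-posed on $[0, t]$ by an analogous fixed point argument. Classical interior parabolic regularity for second-order equations with drift in $L^\infty_s L^p_x$, $p > d$ (the De Giorgi--Nash--Moser--Aronson theory), yields H\"older continuity of the backward fundamental solution $(s, y) \mapsto \phi_s(y)$ on every compact subset of $[0, t) \times \mD$. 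The duality identity $\langle w_t, \phi \rangle = \langle w_0, \phi_0 \rangle$ combined with the Schwartz kernel theorem produces the kernel $\Gamma(u, v; t, x, y)$ on $(0, \infty) \times \mD \times \mD$, and the analogous H\"older regularity applied to the forward equation yields joint continuity in $(t, x, y)$.

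The main obstacle is the low regularity of the drift in the polynomial case $\mf{a} < \infty$: one must ensure that $\bar\varrho$ is large enough that $B(u, v)$ belongs to a subcritical Lebesgue class for which interior Nash--Moser estimates apply. Once this is arranged, the remaining work reduces to standard parabolic theory on the bounded $C^2$ domain $\mD$.
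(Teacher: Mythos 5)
Your proposal is correct, and its first half (the choice of $\overline{\varrho}$ so that $B(u,v)$ lies in a subcritical Lebesgue class, followed by a Picard iteration exploiting the smoothing of the Dirichlet heat semigroup) is exactly the mechanism the paper uses. Where you diverge is in how the kernel $\Gamma$ is produced. The paper runs the same fixed point argument directly with the Dirac mass $\delta_{y}$ as initial datum: since $P_{t}$ maps the relevant negative-regularity Besov class into $L^{\zeta'}$ with an integrable singularity in $t$, the fundamental solution is constructed in one shot, and the representation $w_{t}=\int_{\mD}\Gamma(t,\cdot,y)w_{0}(y)\,\ud y$ then follows from linearity; continuity in $(t,x,y)$ comes out of the same construction. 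You instead first solve for regular initial data, pass to $w_{0}\in L^{1}$ by density via the $L^{1}$-contraction, and then recover $\Gamma$ by duality with the backward equation together with De Giorgi--Nash--Moser/Aronson interior regularity and the Schwartz kernel theorem. Both routes are legitimate; the paper's is more economical (no appeal to Nash--Moser theory, and the kernel and its continuity are obtained simultaneously), while yours is more careful about uniqueness in the class of $L^{1}$ initial data, which the paper glosses over. Two small remarks: the Schwartz kernel theorem is redundant in your scheme, since the Aronson-type construction already yields a pointwise-defined kernel with Gaussian bounds; and when you invoke the $L^{1}$-contraction for the linear equation \eqref{eqn:w}, make sure you derive it from the maximum principle and the boundary computation \eqref{eqn:div}--\eqref{eqn:to-bound} directly, rather than from Lemma~\ref{lem:contract}, whose proof already presupposes the kernel representation you are trying to establish.
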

\begin{proof}
Clearly it suffices to construct the fundamental solution $ (t, x)
\mapsto\Gamma(u,v;t, x, y) $ , which solves \eqref{eqn:w} with initial
condition the Dirac delta centered at $ y $:  $ x \mapsto \delta_{y} (x) $.
Note that by Assumption~\ref{assu:noise}, in particular via our growth
requirement on $ A $ and $ A^{\prime} $ , we can find for any $ \zeta > 0 $ a $ \overline{\varrho}
(\mf{a}, d) > 0 $ such that $ B(u, v) \in C([0, \infty); L^{\zeta} )$ if $ u, v \in
C([0, \infty); L^{ \overline{\varrho}(\zeta)} ) $. Then the fundamental
solution can be constructed via classical arguments, by smoothing the initial
data from the Besov space $ \mB^{0}_{\infty, 1} $ to $ L^{\zeta^{\prime}} $,
with $ \zeta^{-1} + (\zeta^{\prime})^{-1} =1 $, see for example
\cite[Chapter 6]{GubinelliPerkowski17KPZReloaded}.
\end{proof}
The property which eventually delivers synchronisation is an $
L^{1} $ contraction principle for \eqref{eqn:main}, whose proof is elementary
and relies on the following computation:
\begin{equation}\label{eqn:div}
\begin{aligned}
    \int_{\mD} w_{t} \ud x & = \int_{0}^{t} \int_{\mD} \Delta w_{s} +
    \div( B(u_{ s},v_{s}) w_{s}) \ud x \ud s + \int_{\mD} w_{0} \ud x \\
    & =\int_{0}^{t} \int_{\partial \mD} \mathbf{n} \cdot ( \nabla w_{s} +
    B(u_{s}, v_{s}) w_{s})  \ud \Sigma  \ud s + \int_{\mD} w_{0} \ud x \\
    & = \int_{0}^{t} \int_{\partial \mD} \mathbf{n} \cdot  \nabla w_{s}   \ud
    \Sigma  \ud s + \int_{\mD} w_{0} \ud x \;,
\end{aligned}
\end{equation}
by the divergence theorem. 
 In the last line we have used that $ w
\vert_{\partial \mD} = 0 $. Now we observe that if $ w_{0} \geqslant 0 $,
then by a maximum principle $ w_{t} \geqslant 0 $ for any $ t \geqslant 0 $ and
in addition (see Lemma~\ref{lem:contract} below for a proof)
\begin{equation}\label{eqn:to-bound}
    \begin{aligned}
        \int_{\partial \mD} \mathbf{n} \cdot \nabla w_{t}  \ud \Sigma \leqslant  0 \;,
    \end{aligned}
\end{equation}
which from \eqref{eqn:div} implies that for $ w_{0} \geqslant 0 $ we have 
\begin{equation}\label{eqn:l1-first}
    \begin{aligned}
        \int_{\mD} w_{t}(x) \ud x \leqslant \int_{\mD} w_{0}(x) \ud x \;.
    \end{aligned}
\end{equation}
From this we deduce the contraction principle as follows.
\begin{lemma}\label{lem:l1-contraction}
    Under Assumption~\ref{assu:noise}, for any $ u_{0}, v_{0} \in L^{\varrho}
    $, with $ \varrho (\mf{a}, d) $ as in Proposition~\ref{prop:wp} and $
    u_{t},v_{t} $  solutions to \eqref{eqn:main} with respectively
    initial conditions $ u_{0} $ and $ v_{0} $, and $ w_{t} = u_{t} -
    v_{t} $, for any $ t \geqslant 0 $ we have that $\| w_{t} \|_{L^{1}}
    \leqslant \| w_{0} \|_{L^{1}}$.
\end{lemma}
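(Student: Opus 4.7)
The plan is to reduce the statement for a sign-changing $w_{0}$ to the nonnegative case already treated in \eqref{eqn:l1-first}, by introducing two auxiliary solutions that pinch $u$ and $v$ from above and below. Concretely, I would let $U$ and $V$ be the mild solutions to \eqref{eqn:main} with initial data $U_{0} = u_{0} \vee v_{0}$ and $V_{0} = u_{0} \wedge v_{0}$ respectively (driven by the same realisation of the noise). Both solutions are well-posed by Proposition~\ref{prop:wp}, provided $u_{0} \vee v_{0}, u_{0} \wedge v_{0} \in L^{\varrho}$, which is immediate from $u_{0}, v_{0} \in L^{\varrho}$.

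The first step is a comparison principle: $V_{t} \leqslant u_{t}, v_{t} \leqslant U_{t}$ pointwise, almost surely. To see this, observe that the difference $\zeta_{t} := U_{t} - u_{t}$ solves
\begin{equation*}
\partial_{t} \zeta + \div( B(U, u) \zeta) - \Delta \zeta = 0 \;, \qquad \zeta\big\vert_{(0,\infty)\times \partial \mD} = 0 \;,
\end{equation*}
with $\zeta_{0} = (u_{0} - v_{0})^{+} \geqslant 0$. This is precisely Equation~\eqref{eqn:w} with a different drift, and by exactly the same maximum principle invoked immediately before \eqref{eqn:to-bound} (and whose proof is the content of the forthcoming Lemma~\ref{lem:contract}), we deduce $\zeta_{t} \geqslant 0$, i.e.\ $u_{t} \leqslant U_{t}$. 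The three remaining inequalities $v_{t} \leqslant U_{t}$, $V_{t} \leqslant u_{t}$, $V_{t} \leqslant v_{t}$ follow identically.

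The second step applies \eqref{eqn:l1-first} to the pair $(U, V)$: the quantity $W_{t} := U_{t} - V_{t}$ satisfies \eqref{eqn:w} with drift $B(U, V)$ and nonnegative initial datum $W_{0} = |u_{0} - v_{0}|$. Hence $W_{t} \geqslant 0$ and
\begin{equation*}
\int_{\mD} W_{t}(x)\, \ud x \leqslant \int_{\mD} W_{0}(x)\, \ud x = \| w_{0} \|_{L^{1}} \;.
\end{equation*}
Combining with the comparison from the first step, $|u_{t} - v_{t}| \leqslant U_{t} - V_{t} = W_{t}$ pointwise, and therefore
\begin{equation*}
\| w_{t} \|_{L^{1}} = \int_{\mD} | u_{t} - v_{t} |\, \ud x \leqslant \int_{\mD} W_{t}\, \ud x \leqslant \| w_{0} \|_{L^{1}} \;,
\end{equation*}
which is the desired contraction.

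The only genuinely nontrivial ingredient is the comparison/maximum principle used in the first step, but this is exactly the same tool (positivity preservation for \eqref{eqn:w} starting from nonnegative data) that the excerpt already invokes on the linear equation to derive \eqref{eqn:to-bound}, and whose proof is deferred to Lemma~\ref{lem:contract}. All other manipulations are routine: well-posedness of $U, V$ is covered by Proposition~\ref{prop:wp}, and the representation via the fundamental solution $\Gamma$ from Lemma~\ref{lem:wp-w} justifies the algebraic identities for the auxiliary differences.
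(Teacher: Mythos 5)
Your argument is correct, but it takes a genuinely different route from the paper. The paper exploits the \emph{linearity} of the auxiliary equation \eqref{eqn:w}: it decomposes the initial datum $w_{0} = (w_{0})_{+} - (w_{0})_{-}$, lets $w^{\pm}$ solve \eqref{eqn:w} with the \emph{same} drift $B(u,v)$ and initial data $(w_{0})_{\pm}$, invokes positivity preservation to get $w^{\pm}_{t}\geqslant 0$, and then applies \eqref{eqn:l1-first} to each piece; no new solutions of the nonlinear equation are needed. You instead run the classical Crandall--Tartar argument: introduce two extra \emph{nonlinear} solutions $U, V$ started from $u_{0}\vee v_{0}$ and $u_{0}\wedge v_{0}$, prove order preservation by applying the linear maximum principle to $U-u$ (with drift $B(U,u)$, a different drift for each comparison), and then apply \eqref{eqn:l1-first} once to $W=U-V$. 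Both proofs rest on exactly the same two ingredients --- positivity preservation for \eqref{eqn:w} and the mass inequality \eqref{eqn:l1-first} --- so neither is more rigorous than the other; the paper's version is more economical (one linear equation, one fixed drift), while yours makes explicit the order-preservation property that the introduction cites as the ``main mechanism'' behind the contraction, and additionally requires well-posedness for the data $u_{0}\vee v_{0}, u_{0}\wedge v_{0}$ (harmless, as these lie in $L^{\varrho}$).

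Two small corrections. First, $\zeta_{0} = (u_{0}\vee v_{0}) - u_{0} = (v_{0}-u_{0})_{+}$, not $(u_{0}-v_{0})^{+}$; this is only a sign slip and nonnegativity is all you use. Second, the positivity preservation you invoke is not ``the content of Lemma~\ref{lem:contract}'' --- that lemma quantifies the boundary flux \eqref{eqn:to-bound}; the positivity itself comes from the nonnegativity of the fundamental solution $\Gamma$ constructed in Lemma~\ref{lem:wp-w}, which is also what the paper relies on when it writes ``by a maximum principle''.
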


\begin{proof}
    We decompose $ w_{0} = (w_{0} )_{+} - (w_{0})_{-} $ in its positive and
    negative part, namely with $ x_{+} = \max \{ x, 0 \} $ and $ x_{-} = 
    \max \{ -x, 0 \} $. Then by linearity of \eqref{eqn:w} we have $ w_{t} =
    w_{t}^{+} - w_{t}^{-} $, where $ w^{\pm}_{t} $ is the solution to
    \eqref{eqn:w} with initial condition $ (w_{0})_{\pm} $. Then by a maximum
    principle we have $ w_{t}^{\pm} \geqslant 0 $. In addition, by
    \eqref{eqn:l1-first} we conclude that
    $ \| w_{t} \|_{L^{1}} \leqslant \|
    w_{t}^{+} \|_{L^{1}} + \| w_{t}^{-} \|_{L^{1}} \leqslant  \| w_{0}^{+} \|_{L^{1}} + \|
    w_{0}^{-} \|_{L^{1}} = \| w_{0} \|_{L^{1}}$.
\end{proof}
In particular, the essence of our proof of synchronisation is to show that a strict contraction
happens with positive probability, namely that $ \| w_{t} \|_{L^{1} } < \zeta
\| w_{0} \|_{L^{1}} $ for some random and shift-ivariant $ \zeta \in [0, 1] $ such that $ \PP(\zeta < 1)
> 0$ (actually the argument in the case $ \mf{a} < \infty $ will be slightly
more involved, as the $ \zeta $ we obtain will not be shift-invariant). For
this purpose, we will need a quantitative bound on the flux at the boundary
\eqref{eqn:to-bound}.

To state the next result, conisider the solution $ [0, \infty) \ni s
\mapsto X_{s}^{t} \in \RR^{d} $ to the SDE 
\begin{equation}\label{e:sde}
    \ud X_{s}^{t}  = B(u_{t+s}, v_{t+s}) (X_{s})\ud s + \ud W_{s}\;,
\end{equation}
where $ W $ is a  $ d $--dimensional Brownian motion,
and $ \tau $ is the stopping time $ \tau = \inf \{ t \geqslant 0  \; \colon
\; X_{s}^{t} \in \partial \mD \} $: the solution to the SDE is
    defined only up to time $ \tau $, so for simplicity we define $
X_{s}^{t} = X_{\tau}^{t} $ for $ \tau \geqslant s $. Since both $ u $ and $v $ lie in \(
C([t, t+h] \times \overline{\mD}) \) for any \( t,h > 0 \) in both cases $
\mf{a} < \infty $ and $ \mf{a} = \infty $ by Proposition~\ref{prop:wp}, it is
straightforward to see that \eqref{e:sde} admists a unique solution. In
particular, the quantity 
\begin{equation}\label{e:pth}
    \mathbf{p}_{t, t+h} =  \inf_{y \in \mD} \PP_{y} (X_{h \wedge
    \tau}^{t} \in \partial \mD)\in (0, 1) 
\end{equation}
is well defined for any $ t, h > 0 $. The next result tells us that $ \mathbf{p}_{t, t+h} $ bounds
the dissipation at the boundary of solutions to \eqref{eqn:w}.

\begin{lemma}\label{lem:contract}
    Under Assumption~\ref{assu:noise} consider $ u_{0}, v_{0} \in L^{\varrho}
    $, for $ \varrho(\mf{a}, d) \in [\mf{a}, \infty] $ as in
    Proposition~\ref{prop:wp} and $ u_{t}, v_{t} $ the solutions to
    \eqref{eqn:main} with respectively intial condition $ u_{0} $ and $
    v_{0} $. Moreover, for any non-negative initial condition $
    \Phi_{0} = w_{0} \geqslant 0, w_{0} \in L^{\varrho} $, let $ (t, x) \mapsto \Phi(u,v, w_{0}; t, x)
    $ be the solution to \eqref{eqn:w} with initial condition $
    w_{0} $. Then for any $ t, h > 0 $ and $ \omega \in \Omega $ there exists a $
    \mathbf{p}_{t, t+h} (\omega,
    u_{0},v_{0}) \in (0, 1) $ such that
\begin{align*}
    \int_{t}^{t+h} \int_{\partial \mD} \mathbf{n}(x) \cdot \nabla \Phi( u , v, w
    _{0}; s, x)  \ud \Sigma (x) \ud s \leqslant
    - \mathbf{p}_{t, t+h} \| w_{0} \|_{L^{1} } \;.
\end{align*}
\end{lemma}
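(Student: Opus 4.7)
The plan is to combine the mass balance identity from \eqref{eqn:div} with a probabilistic representation of $\Phi$ via the killed diffusion associated to the SDE \eqref{e:sde}. Since $\Phi \geqslant 0$ by a maximum principle, the integrated boundary flux on $[t, t+h]$ is precisely the loss of total $L^{1}$ mass over that window, which in turn equals the probability that the killed diffusion, started at a point distributed according to $w_{0}$, exits $\mD$ within time $h$.

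Concretely, applying the divergence theorem as in \eqref{eqn:div} to $\Phi$, interpreted as initialised with $w_{0}$ at the starting time $t$ (consistent with the shift $B(u_{t+\cdot}, v_{t+\cdot})$ appearing in \eqref{e:sde}), yields
\begin{align*}
\int_{t}^{t+h} \int_{\partial \mD} \mathbf{n}(x) \cdot \nabla \Phi(u, v, w_{0}; s, x) \, \ud \Sigma(x) \, \ud s = \int_{\mD} \Phi(u,v,w_{0};t+h,x) \, \ud x - \|w_{0}\|_{L^{1}}.
\end{align*}
Using the decomposition $\Phi(u,v,w_{0};s,x) = \int_{\mD} \Gamma(u,v;s,x,y) w_{0}(y) \, \ud y$ from Lemma~\ref{lem:wp-w} together with Fubini, it suffices to control $\int_{\mD} \Gamma(u,v;t+h,x,y)\, \ud x$. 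Observing that \eqref{eqn:w} is the forward Kolmogorov equation for the time-inhomogeneous diffusion with generator proportional to $\Delta + B(u_{s},v_{s}) \cdot \nabla$ killed on $\partial \mD$, one obtains
\begin{align*}
\int_{\mD} \Gamma(u,v;t+h,x,y) \, \ud x = \PP_{y}\bigl( X^{t}_{h \wedge \tau} \notin \partial \mD \bigr) = 1 - \PP_{y}\bigl( X^{t}_{h \wedge \tau} \in \partial \mD \bigr).
\end{align*}
Inserting into the previous display and bounding from below using the definition of $\mathbf{p}_{t,t+h}$ in \eqref{e:pth} concludes the proof.

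The main technical step is the rigorous identification of $\Gamma$ with the transition density of the killed process. The drift $B(u_{\cdot}, v_{\cdot})$ is continuous and uniformly bounded on $[t, t+h] \times \overline{\mD}$ for $t > 0$, thanks to the parabolic smoothing of $P_{s}$ together with the a priori $L^{\infty}$ estimates of Proposition~\ref{prop:wp} applied to $u, v$; classical results for time-inhomogeneous diffusions with continuous bounded drift (in the spirit of Friedman or Stroock--Varadhan) then give weak existence and uniqueness of \eqref{e:sde} and identify its sub-probability density on $\mD$ with $\Gamma$. The positivity $\mathbf{p}_{t, t+h} > 0$ asserted in \eqref{e:pth} follows from a Girsanov comparison or standard support-theorem argument for Brownian motion with bounded drift in a bounded domain, while $\mathbf{p}_{t, t+h} < 1$ is immediate since there is always positive probability of remaining inside $\mD$ up to time $h$.
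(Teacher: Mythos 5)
Your proposal is correct and follows essentially the same route as the paper: both arguments combine the mass-balance identity \eqref{eqn:div} with the kernel representation from Lemma~\ref{lem:wp-w} and the identification of $\int_{\mD}\Gamma(u,v;\cdot,x,y)\,\ud x$ with the survival probability of the killed diffusion \eqref{e:sde}, so that the integrated boundary flux is bounded by $-\mathbf{p}_{t,t+h}\|w_0\|_{L^1}$. The only differences are cosmetic (you apply the divergence theorem before passing to the kernel, the paper does the reverse) plus your added discussion of the forward-Kolmogorov identification, which the paper leaves implicit.
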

\begin{proof}

    We can represent the solution $ w
    $ through the kernel $ \Gamma $ associated to \eqref{eqn:w}. Namely for any
    $ y \in \mD $, let $ \Gamma ( u, v; t, x, y) = \Phi (u,v, \delta_{y}; t, x)$ be
    the solution to \eqref{eqn:w} with
    initial condition $ \Gamma (u,v; 0, x, y) = \delta_{y} (x) $. Then by linearity
    of \eqref{eqn:w} we have $ \Phi (u,v, w_{0}; t, x) = \int_{\mD} \Gamma
    (u,v; t, x ,y) w_{0}(y) \ud y $ for any initial
    condition $ w_{0} $. To simplify the notation we shall henceforth write $
    \Gamma (t, x) = \Gamma(u, v; t, x) $ and $ \Phi(t, x) = \Phi (u, v,
    w_{0}; t, x) $.
    Then we can bound
\begin{align*}
    \int_{t}^{t+h} \int_{\partial \mD} \mathbf{n}(x) \cdot \nabla_{x} w_{t}(x)  \ud \Sigma(x) \ud
    s & = \int_{t}^{t+h} \int_{\mD} \int_{\partial \mD} \mathbf{n} (x) \cdot \nabla_{x} \Gamma_{s} (x, y)
    w_{0}(y) \ud \Sigma(x) \ud y \ud s \\
    & \leqslant \left\{ \sup_{y \in \mD}
        \int_{t}^{t+h} \int_{\partial \mD} \mathbf{n} (x)
    \cdot \nabla_{x} \Gamma_{s} (x, y) \ud \Sigma(x) \ud s \right\} \| w_{0}
    \|_{L^{1}}\;.
\end{align*}
If we follow backwards the calculations in \eqref{eqn:div} we find
\begin{align*}
    \sup_{y \in \mD} \left\{ \int_{t}^{t+h} \int_{\partial \mD} \mathbf{n}
    \cdot \nabla_{x} \Gamma_{s} (x, y) \ud \Sigma(x) \ud s \right\} = \sup_{y \in \mD} \left\{
    \int_{\mD} \Gamma_{t}(x , y) \ud x - 1 \right\}
    = - \mathbf{p}_{t, t+h} \;.
\end{align*}
The proof is complete.
\end{proof}

Note that $ \mathbf{p}_{t, h} $ is decreasing in $ t $ and increasing in $
h $. In particular by continuation we can define $ \mathbf{p}_{t, h} \in [0, 1]
$ for all $ t, h \geqslant 0 $.

\begin{corollary}\label{cor:strict-contraction}
    Under Assumption~\ref{assu:noise} consider $ u_{0}, v_{0} \in L^{\varrho}
    $, for $ \varrho(\mf{a}, d) \in [\mf{a}, \infty] $, and let $ w_{t} =
    u_{t} - v_{t} $. Then for any $ t, h \geqslant  0 $
    \begin{align*}
        \| w_{t+h} \|_{L^{1}} \leqslant (1 - \mathbf{p}_{t, t + h}) \| w_{t}
        \|_{L^{1}} \;.
    \end{align*}
\end{corollary}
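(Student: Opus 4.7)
The plan is to reduce Corollary~\ref{cor:strict-contraction} to Lemma~\ref{lem:contract} by decomposing $ w_{t} $ at time $ t $ into its positive and negative parts and exploiting the linearity of \eqref{eqn:w}. The key observation is that the quantity $ \mathbf{p}_{t, t+h} $ in \eqref{e:pth} is already defined via the SDE with drifts $ B(u_{t+\cdot}, v_{t+\cdot}) $ shifted by $ t $, so the boundary-flux bound of Lemma~\ref{lem:contract} applies verbatim to the Cauchy problem launched at time $ t $ from any non-negative datum.

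First I would set $ (w_{t})_{+} = \max\{w_{t}, 0\} $, $ (w_{t})_{-} = \max\{-w_{t}, 0\} $, and let $ \bar w^{\pm} $ denote the solutions to \eqref{eqn:w} on $ [t, t+h] $ with non-negative initial data $ (w_{t})_{\pm} $ prescribed at time $ t $; these exist and are unique by Lemma~\ref{lem:wp-w}. By linearity, $ w_{t+s} = \bar w_{s}^{+} - \bar w_{s}^{-} $ for $ s \in [0, h] $ (indexing the $ \bar w^{\pm} $ on the shifted clock), and the same maximum principle already used in the proof of Lemma~\ref{lem:l1-contraction} guarantees $ \bar w^{\pm}_{s} \geqslant 0 $ throughout. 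Repeating the integration by parts of \eqref{eqn:div} for each $ \bar w^{\pm} $ on the window $ [0, h] $, using that $ \bar w^{\pm} $ vanishes on $ \partial \mD $ to drop the $ B(u,v) \bar w^{\pm} $ contribution, yields
\begin{equation*}
    \| \bar w^{\pm}_{h} \|_{L^{1}} = \| (w_{t})_{\pm} \|_{L^{1}} + \int_{0}^{h} \int_{\partial \mD} \mathbf{n}(x) \cdot \nabla \bar w^{\pm}_{s}(x) \, \ud \Sigma(x) \, \ud s.
\end{equation*}

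Next I would invoke Lemma~\ref{lem:contract} for each $ \bar w^{\pm} $: applied to the equation with the shifted drifts $ B(u_{t+\cdot}, v_{t+\cdot}) $ from time $ 0 $ to $ h $, it gives
\begin{equation*}
    \int_{0}^{h} \int_{\partial \mD} \mathbf{n}(x) \cdot \nabla \bar w^{\pm}_{s}(x) \, \ud \Sigma(x) \, \ud s \leqslant - \mathbf{p}_{t, t+h} \| (w_{t})_{\pm} \|_{L^{1}},
\end{equation*}
the constant being exactly the one defined in \eqref{e:pth} since the SDE~\eqref{e:sde} underlying that definition is already written with the shifted drifts. Inserting this into the identity above yields $ \| \bar w^{\pm}_{h} \|_{L^{1}} \leqslant (1 - \mathbf{p}_{t, t+h}) \| (w_{t})_{\pm} \|_{L^{1}} $, and the corollary then follows from the triangle inequality $ \| w_{t+h} \|_{L^{1}} \leqslant \| \bar w^{+}_{h} \|_{L^{1}} + \| \bar w^{-}_{h} \|_{L^{1}} $ combined with $ \| (w_{t})_{+} \|_{L^{1}} + \| (w_{t})_{-} \|_{L^{1}} = \| w_{t} \|_{L^{1}} $.

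The one point worth flagging is the formal justification of the time-shift in the application of Lemma~\ref{lem:contract}, since the lemma is phrased with the equation launched at time $ 0 $ and the flux integrated over $ [t, t+h] $. However this is a purely notational matter: the SDE \eqref{e:sde} defining $ \mathbf{p}_{t,t+h} $ is autonomous in the shifted time variable, so relabelling $ s \leftrightarrow t + s $ carries the boundary-flux bound across with the same constant. All other ingredients — linearity, the maximum principle, and the identity \eqref{eqn:div} — are insensitive to the starting time.
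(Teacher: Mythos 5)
Your proposal is correct and follows essentially the same route as the paper: decompose into positive and negative parts, apply the maximum principle and the mass-balance identity \eqref{eqn:div}, and invoke Lemma~\ref{lem:contract} to bound the boundary flux by $-\mathbf{p}_{t,t+h}$ times the initial mass. If anything, your version is slightly more careful than the paper's, since you decompose $w_t$ at time $t$ (so that $\|(w_t)_+\|_{L^1}+\|(w_t)_-\|_{L^1}=\|w_t\|_{L^1}$ exactly) and you explicitly address the time-shift in the application of Lemma~\ref{lem:contract}, both of which the paper glosses over.
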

\begin{proof}
    It suffices to prove the result for $ t, h > 0 $ and passing to limits in
    the case $ t $ or $ h = 0$. As in the proof of Lemma~\ref{lem:l1-contraction} we have
    $\| w_{t} \|_{L^{1}} \leqslant \| w_{t}^{+} \|_{L^{1}} + \| w_{t}^{-}
    \|_{L^{1}}$. Now in addition by Lemma~\ref{lem:contract} and the calculation in \eqref{eqn:div}
    \begin{align*}
        \| w^{+}_{t+h} \|_{L^{1}} \leqslant (1 - \mathbf{p}_{t, t +h}) \|
        w_{t}^{+} \|_{L^{1}} \;, \qquad \| w^{-}_{t+h} \|_{L^{1}} \leqslant
        (1 - \mathbf{p}_{t, t+h}) \| w_{t}^{-} \|_{L^{1}} \;,
    \end{align*}
    so that the result follows immediately.
\end{proof}

    Next we provide a lower bound to the exit probability $
    \mathbf{p}_{t, t+h} $, for any $ t, h >0 $, via Girsanov's transformation.

    \begin{lemma}\label{lem:control}
    Under Assumption~\ref{assu:noise} there exist constants $ c \in
    (0, 1), C > 0 $ depending only on the choice of the domain $ \mD $, such that for
    all $ t \geqslant 0\;, h \in [0, 1] $
    \begin{align*}
        \mathbf{p}_{t, t+h}(u, v, \omega)  \geqslant c \exp \left\{ - h^{-1}
            \left( C +  \sup_{0 \leqslant s \leqslant h}  \| B (u_{t+
        s}(\omega), v_{t+s}(\omega)) \|_{\infty}^{2} \right) \right\} \;,
    \end{align*}
    where $ \mathbf{p}_{t, t+h} $ is as in \eqref{e:pth}.

\end{lemma}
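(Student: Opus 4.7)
The plan is to apply Girsanov's theorem to transform the SDE \eqref{e:sde} into a standard Brownian motion under an equivalent measure, and then reduce the claim to a uniform lower bound on the exit probability of Brownian motion from $\mD$, paying for the change of measure via the $L^\infty$ bound on the drift $B$. Concretely, fix $y \in \mD$ and $\omega$; by Proposition~\ref{prop:wp} the coefficient $B(u_{t+\cdot}, v_{t+\cdot})$ is almost surely bounded on $[0, h]$, so Novikov's condition is satisfied and there exists an equivalent measure $\mathbb{Q}_y$ on $\mF_h$ under which $X^t$ is a standard Brownian motion started at $y$, with Radon--Nikodym derivative
\begin{equation*}
L_h := \frac{d\PP_y}{d\mathbb{Q}_y}\bigg|_{\mF_h} = \exp\left(\int_0^h B \cdot d\tilde W_s - \frac{1}{2}\int_0^h |B|^2\, ds\right),
\end{equation*}
a $\mathbb{Q}_y$-martingale of mean one, where $\tilde W$ denotes the driving $\mathbb{Q}_y$-Brownian motion and we abbreviate $B = B(u_{t+s}, v_{t+s})(X^t_s)$.

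A straightforward Cauchy--Schwarz argument then yields the reverse-direction bound
\begin{equation*}
\mathbb{Q}_y(\tau \leq h)^2 = \bigl(\EE^{\mathbb{Q}_y}\bigl[L_h^{-1/2} \cdot L_h^{1/2}\mathbf{1}_{\tau\leq h}\bigr]\bigr)^2 \leq \EE^{\mathbb{Q}_y}[L_h^{-1}] \cdot \PP_y(\tau \leq h),
\end{equation*}
using that $\EE^{\mathbb{Q}_y}[L_h \mathbf{1}_{\tau \leq h}] = \PP_y(\tau \leq h)$ by definition of $L_h$. It then suffices to estimate the two factors separately. For the denominator, rewriting $L_h^{-1} = \exp(-N_h - \frac{1}{2}\langle N \rangle_h) \cdot \exp(\langle N \rangle_h)$ with $N_h = \int_0^h B \cdot d\tilde W_s$ a $\mathbb{Q}_y$-martingale of quadratic variation $\langle N \rangle_h \leq h \|B\|_\infty^2$, and using that the Dol\'eans--Dade exponential $\exp(-N_h - \frac{1}{2}\langle N\rangle_h)$ is a mean-one $\mathbb{Q}_y$-martingale, one obtains $\EE^{\mathbb{Q}_y}[L_h^{-1}] \leq \exp(h \|B\|_\infty^2) \leq \exp(\|B\|_\infty^2/h)$, where the last step uses $h \in [0, 1]$. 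For the numerator, since under $\mathbb{Q}_y$ the process $X^t$ is a standard Brownian motion in the bounded $C^2$ domain $\mD$, classical Gaussian heat-kernel estimates furnish a uniform lower bound $\mathbb{Q}_y(\tau \leq h) \geq c_0 \exp(-C_0/h)$ with $c_0, C_0$ depending only on $\mD$. Combining both estimates yields the claim with $c = c_0^2$ and $C = 2C_0$.

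The main technical point is establishing this uniform lower bound on $\mathbb{Q}_y(\tau \leq h)$: the hardest case is $y$ deep in the interior of $\mD$, where one can argue by fixing a point $z_0 \in \mathbb{R}^d \setminus \overline{\mD}$ and a radius $r > 0$ such that $B(z_0, r) \subset \mD^c$, and then bounding $\mathbb{Q}_y(\tau \leq h) \geq \mathbb{Q}_y(X^t_h \in B(z_0, r))$ via the free-space Gaussian heat kernel. This yields a bound of order $h^{-d/2}\exp(-(\dist(y,z_0)+r)^2/(2h))$, and boundedness of $\mD$ makes the exponent uniformly controlled in $y$, giving the desired $c_0 \exp(-C_0/h)$.
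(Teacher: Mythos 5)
Your argument is correct and rests on the same two pillars as the paper's proof: a Girsanov change of measure that trades the drift $B$ for an exponential cost of order $\exp\{h^{-1}\sup_s\|B\|_\infty^2\}$, followed by a reverse Cauchy--Schwarz step to transfer a uniform lower bound under the new measure back to $\PP_y$. The one genuine difference is the choice of reference dynamics: the paper does not remove the drift only, but replaces it with a strong constant drift $Ch^{-1}e_1$ that translates the whole domain outside itself by time $h$, so that the exit event under the new measure reduces to a small-ball estimate $\PP(|\widetilde W_1|\leqslant 1)=c$ with no further geometric input; you instead reduce to driftless Brownian motion and prove the uniform exit bound $\mathbb{Q}_y(\tau\leqslant h)\geqslant c_0 e^{-C_0/h}$ by hand, via the free-space heat kernel and a ball $B(z_0,r)\subseteq \mD^c$. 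Your route costs a correct but slightly more delicate hitting estimate (the $h^{-d/2}$ prefactor is harmless for $h\leqslant 1$, as you note), while the paper's costs an extra $C^2h^{-1}$ inside the Girsanov exponent from the added constant drift; the resulting bounds have the same form, and the constants depend on $\mD$ in both cases. One technicality that you share with the paper and could make explicit: the SDE \eqref{e:sde} is stopped at $\tau$, so Girsanov should be applied up to $\tau\wedge h$ (or with the drift extended continuously to $\overline{\mD}$, which is possible since $u,v\in C(\overline{\mD})$ and $B\in C(\RR^2;\RR^d)$); the event $\{\tau\leqslant h\}$ is $\mF_{\tau\wedge h}$-measurable, so the optional stopping version of the change of measure applies and the estimate is unaffected.
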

\begin{proof}
    This result follows from Girsanov's transformation. Fix the drift $ (s,x)
    \mapsto \psi^{t}_{s} (u, v)(x) $ defined by
    \begin{align*}
        \psi^{t}_{s}(x)  = - B(u_{t+s}, v_{t+ s})(x) + C h^{-1} e_{1} \;, \qquad \forall s
        \in [0, h] \;,
    \end{align*}
    for $ x \in \mD $, with $ e_{1} = (1,0, \dots, 0) \in \RR^{d} $ and $ C > 0 $ a positive
    constant to be chosen later on.
    Then consider the ode
    \begin{align*}
        \partial_{s} X^{t, \psi^{t}}_{s} = B(u_{t+s}, v_{t+s})(X_{s}^{t,
        \psi^{t}}) + \psi_{s}^{t}(X^{t, \psi^{t}}_{s}) \;, \forall s \in
        [0,h] \;,
    \end{align*}
    so that $ X^{t, \psi^{t}}_{h} = X_{0}^{t, \psi^{t}} + C e_{1} \not\in \mD $ for all $
    X_{0}^{n, \psi^{n}} \in \mD $, provided $ C > 0 $ is chosen sufficiently
    large, depending on the domain \( \mD \).

    Now, since $ \psi^{t} \in \mathcal{CM} $, the Cameron--Martin space of $ W
    $, we find that under the probability measure $ \PP^{\psi^{t}}_{y} $ given by the
    Radon--Nikodym derivative
    \begin{align*}
        \frac{\ud \PP^{\psi^{t}}_{y}}{\ud \PP_{y}} = \exp \left\{ \int_{0}^{h}
            \psi_{s}^{t} \ud W_{s} - \frac{1}{2} \int_{0}^{h} | \psi_{s}^{t} |^{2} \ud s \right\} \;,
    \end{align*}
    the solution $ X_{s}^{t} $ to the SDE \eqref{e:sde} solves the SDE
    \begin{align*}
        \ud X_{s}^{t} = C e_{1} \ud s + \ud \widetilde{W}_{s} \;, \qquad
        X^{t}_{0} = y \;, \qquad \forall s \in [0, h \wedge \tau] \;,
    \end{align*}
    where $ \widetilde{W} $ is a Brownian motion under $ \PP^{\psi^{t}}_{y} $.
    In particular, under $ \PP^{\psi^{t}}_{y} $ we have $X_{h}^{t} = y + \widetilde{W}_{h} + C
    e_{1}$. Hence if $ C > 0 $ is sufficiently large so that $ (\mD+
    B_{1}(0)+ C e_{1}) \cap \mD = \emptyset$ (here $ B_{1} (0) $ is the ball of unit Euclidean
    radius centered about zero and we consider the sum of sets $ A + B = \{
    a + b  \; \colon \; a \in A , b \in B \} $), we can conclude that
    \begin{equation}\label{eqn:lb-p}
        \begin{aligned}
            \inf_{y \in \mD} \PP^{\psi^{t}}_{y}( X_{h}^{t} \not\in  \mD)
            \geqslant \PP^{\psi^{t}}_{y}( |
            \widetilde{W}_{h} | \leqslant 1 ) \geqslant \PP^{\psi^{t}}_{y}( |
            \widetilde{W}_{1} | \leqslant 1 )= c\;,
        \end{aligned}
    \end{equation}
    for some constant $ c > 0 $, where we used that $ h \in [0, 1] $. Now we
    would like to pass from the probability
    measure $ \PP^{\psi^{t}}_{y} $ to the original probability measure $
    \PP_{y} $:
    \begin{align*}
        \PP^{\psi^{t}}_{y}(A) & = \EE_{y} \left[ 1_{A}\exp \left\{ \int_{0}^{h}
                \psi^{t}_{s}(X^{t}_{s})  \ud
        W_{s} - \frac{1}{2} \int_{0}^{h} | \psi^{t}_{s}(X^{t}_{s}) |^{2} \ud s \right\}\right] \\
        & \leqslant \PP_{y}(A)^{\frac{1}{2}} \EE_{y} \left[\exp \left\{ 2 \int_{0}^{h}
                \psi^{t}_{s}(X^{t}_{s}) \ud W_{s} -  \int_{0}^{h} |
                \psi^{t}_{s}(X^{t}_{s}) |^{2} \ud s \right\}\right]^{\frac{1}{2}} \\
        & \leqslant \PP_{y} (A)^{\frac{1}{2}} \EE_{y} \left[\exp \left\{  \int_{0}^{h} |
                \psi^{t}_{s}(X^{t}_{s}) |^{2} \ud s \right\}\right]^{\frac{1}{2}} \;.
    \end{align*}
    In addition, we have that
    \begin{align*}
        \sup_{x \in \mD}| \psi^{t}_{s}(x)| \leqslant \sup_{0 \leqslant s \leqslant h}
        \| B(u_{t+s}, v_{t+s}) \|_{\infty} + C h^{-1}  \;.
    \end{align*}
    We therefore conclude 
    \begin{align*}
        \inf_{y \in \mD} \PP_{y} (X^{t}_{h} \not\in \mD) & \geqslant \left[\inf_{y \in \mD}
        \PP_{y}^{\psi^{t}} (X^{t}_{h} \not\in \mD) \right]^{2} \left\{
        \EE_{y} \left[\exp \left\{  \int_{0}^{h} |
        \psi^{t}_{s}(X^{t}_{s}) |^{2} \ud s \right\}\right]\right\}^{-1} \\
        & \geqslant c^{2} \; \EE_{y} \left[\exp \left\{  \int_{0}^{h} - |
        \psi^{t}_{s}(X^{t}_{s}) |^{2} \ud s \right\}\right] \\
        & \geqslant c^{2} \exp \left\{ - h^{-1} \left( C + \sup_{0 \leqslant s \leqslant h}
        \| B (u_{t + s}, v_{t+ s}) \|_{\infty}^{2} \right) \right\} \;,
    \end{align*}
    with $ c $ as in \eqref{eqn:lb-p} and by Jensen's inequality. This
    concludes the proof of the lemma, up to choosing a smaller $ c > 0 $.

\end{proof}

We want to make use of the previous result in combination with the ergodic
theorem. Therefore, let us write $ \mf{S}_{t}(\omega) $ for the (random)
solution map to \eqref{eqn:main} at time $ t \geqslant 0 $: namely $ u (t,
\omega) = \mf{S}_{t} (\omega) u_{0} $. Then we can bound, for every $ n \in \NN
\setminus \{ 0 \} $:
\begin{equation}\label{eqn:c-B}
    \begin{aligned}
            \sup_{0 \leqslant s \leqslant 1} \| B(u_{n+s}, v_{n+s} ) \|_{\infty}
    & \leqslant \sup_{u_{0}, v_{0} \in L^{\infty}} \| B( \mf{S}_{1 +
    s}(\vt^{n-1} \omega) u_{0}, \mf{S}_{1 + s}(\vt^{n-1} \omega) v_{0})
    \|_{\infty} \\
    & \leqslant \sup_{a, b \in B_{\mf{c}(\vt^{n-1} \omega)}(0)} B(a, b)
    \stackrel{\mathrm{def}}{=} \mf{c}_{B} (\vt^{n -1}\omega) < \infty \;,
    \end{aligned}
\end{equation}
with $ \mf{c} (\omega) $ as in Corollary~\ref{cor:pathwise-bound} and where $
B_{\varrho} (0) $ is the ball of radius $ \varrho > 0 $ about zero.

\begin{proof}[Proof of Theorem~\ref{thm:synchro}]
    As usual we distinguish the case $ \mf{a} = \infty $ from the case $
    \mf{a} < \infty$.

    \textit{Case $ \mf{a} = \infty $.} Here we can use Lemma~\ref{lem:contract}
    together with \eqref{eqn:div} (and following the notation in the proof of
    Lemma~\ref{lem:contract}) to bound, for every $ n \in \NN \setminus \{
    0 \} $ 
    \begin{align*}
        \| w_{n+1} \|_{L^{1}} & \leqslant \| w^{+}_{n+1} \|_{L^{1}} + \|
        w^{-}_{n+1}  \|_{L^{1}} \\
        & \leqslant (1 - \mathbf{p}_{n, n+1}) ( \| w_{n}^{+}
        \|_{L^{1}} + \| w_{n}^{-} \|_{L^{1}})  \leqslant (1 - \mathbf{p}_{n,
        n+1}) \| w_{n} \|_{L^{1}} \;,
    \end{align*}

    Therefore we obtain
    \begin{align*}
        \lim_{n \to \infty} \frac{1}{n} \log{ \| w_{n} \|_{L^{1}}} \leqslant
        \lim_{n \to \infty} \frac{1}{n} \left( \sum_{i=0}^{n-1} \log{ (1 -
        \mathbf{p}_{i, i+1})} + \log{ \| w_{0} \|_{L^{1}}} \right)\;.
    \end{align*}
    Now we use the bound in Lemma~\ref{lem:control} together with
    \eqref{eqn:c-B} to obtain by the ergodic theorem, almost surely
    \begin{align*}
        \lim_{n \to \infty} \frac{1}{n} \log{ \| w_{n} \|_{L^{1}}} & \leqslant
        \lim_{n \to \infty} \frac{1}{n}  \sum_{i=1}^{n-1} \log{\left(1 - \exp
        \left\{ - \mf{c}_{B}^{2} (\vt^{i-1} \omega)  \right\} \right)} \\
        & = \EE \left[ \log{\left(1 - \exp \left\{ - \mf{c}_{B}^{2} \right\} \right)}
        \right] < 0 \;. 
    \end{align*}
    Since almost surely
    \begin{align*}
        \lim_{n \to \infty}\frac{1}{n} \left\{\log{ (1 - \mathbf{p}_{0, 1})}+
        \log{\| w_{0} \|_{L^{1}} }  \right\} = 0\;,
    \end{align*}
    which concludes the proof.

    \textit{Case $ \mf{a} < \infty $.} Here we use the results of
    Section~\ref{sec:lyap} below. In that section we introduce a constant $
		\overline{R}_{2} \in (0, \infty)$ and a sequence of stopping times
		\begin{align*}
				0 \leqslant \tau^{\mathrm{in}}_{0} < \dots < \tau^{\mathrm{in}}_{i} <
				\tau^{\mathrm{out}}_{i} \;,
		\end{align*}
		such that $ \max \{ \| u_{t} \|_{\infty}, \| v_{t} \|_{\infty} \} \leqslant
		\overline{R}_{2} $ for $ t \in [\tau^{\mathrm{in}}_{i},
		\tau^{\mathrm{out}}_{i}] $ and $ i \in \NN $. Then let us define 
		\begin{align*}
				B= B ( \overline{R}_{2}) = \max \{ B(u, v)  \; \colon \; \max \{ | u |, | v |\}
				\leqslant \overline{R}_{2} \} \;.
		\end{align*}
    Moreover, by construction $ T_{i}^{\infty} =
    \tau^{\mathrm{out}}_{i} - \tau^{\mathrm{in}}_{i} \leqslant 1$, so that we
    are in the setting of Lemma~\ref{lem:control} and can bound
    \begin{align*}
        \log{ \| w_{t} \|_{L^{1}} } & \leqslant \sum_{i  \; \colon \;
        \tau^{\mathrm{out}}_{i} \leqslant t}
        \log{\mathbf{p}_{\tau^{\mathrm{in}}_{i}, \tau^{\mathrm{out}}_{i}} } +
        \log {\| w_{\tau^{\mathrm{in}}_{1}} \|_{L^{1}}}
        \leqslant - \sum_{i  \; \colon \;
        \tau^{\mathrm{out}}_{i} \leqslant t}
        c(T_{i}^{\infty}, B) +
        \log {\| w_{\tau^{\mathrm{in}}_{1}} \|_{L^{1}}} \;,
    \end{align*}
    where the constants $ c(T_{i}^{\infty}, B) $ are given 
    by
    \begin{align}\label{eqn:def-c}
        c(T^{\infty}_{i}, B) = - \log \left\{ 1 - c\exp \left(
        (T_{i}^{\mathrm{\infty}})^{-1} (C + B)   \right) \right\} \;,
    \end{align}
    with $ c , C > 0 $ defined in Lemma~\ref{lem:control}. Now by
    Proposition~\ref{prop:center-time} there exits an $ \eta= \eta
    (B) > 0 $ such that
    \begin{align*}
        \limsup_{t \to \infty} \frac{1}{t}  \log{\| w_{t} \|_{L^{1}}} \leqslant
        - \eta \;,
    \end{align*}
    so that the result is proven.

\end{proof}

\subsection{Lyapunov estimates}\label{sec:lyap}

In this section we consider the setting of Assumption~\ref{assu:noise} in the
case $ \mf{a} < \infty $. We then choose two initial condition $ u_{0},
v_{0} \in L^{\varrho} $, for $ \varrho (\mf{a}, d) $ as in
Proposition~\ref{prop:wp}, and associated respectively to two solutions $
u_{t}, v_{t} $ of \eqref{eqn:main} driven by the same noise. The aim of this section is then to derive
estimates on the time that the couple \( \mathbf{u}_{t} = (u_{t}, v_{t}) \)
spends in the ``center'' of the state space, i.e.\ a ball of finite volume
about the origin in $ L^{\varrho} $. Here we will make use of the last
requirement in Assumption~\ref{assu:noise}, that $ \psi_{0} $ is not random, so
that the process $ \mathbf{u}_{t} $ is Markov.

For a 2D vector $ \mathbf{u} = (u, v) \in L^{p}(\mD;
\RR^{2}) $ we will make use of the norms $ \| \mathbf{u} \|_{L^{p}} = (\| u
\|_{L^{p}}^{p} + \| v \|_{L^{p}}^{p} )^{\frac{1}{p}}$ for $ p \in [1, \infty],$
with the usual convention for $ p = \infty $.
We fix any $ 0 < R_{1} < R_{2} < R_{3} < \infty $ (we shall fix these values in
Lemma~\ref{lem:moment-bound}) and define the ``center'' subsets of $
L^{p} $
\begin{equation}\label{eqn:center-Lp}
    \begin{aligned}
            \tC^{-} = \{ \mathbf{u} \in L^{p}  \; \colon \;\| \mathbf{u}
    \|_{L^{p}} \leqslant R_{1} \} \;, \ \
    \tC^{\mathrm{m}} = \{ \mathbf{u} \in L^{p}  \; \colon \;\| \mathbf{u}
    \|_{L^{p}} \leqslant R_{2} \} \;, \\
    \tC^{+} = \{ \mathbf{u} \in
        L^{p}  \; \colon \; \| \mathbf{u} \|_{L^{p}} \leqslant R_{3} \}\;.
    \end{aligned}
\end{equation}
Our aim will then be to prove that the process $ \mathbf{u}_{t} $ passes a
substantial amount of time in the set $ \tC^{+} $: this will guarantee 
an $ L^{1} $ contraction with rate roughly dependent on $ R_{3} $. Actually, since the contraction
depends on the $ L^{\infty} $ norm of $ \mathbf{u}_{t} $, we will have to
additionally make use of parabolic regularisation to show that once in $
\tC^{+}$, the process is likely to stay bounded also in $ L^{\infty} $. The
reason why we consider the $ L^{p} $ norm in the definition of $
\tC^{\pm} $ is that in the energy estimates of Proposition~\ref{prop:wp} we have
shown that if $ \mf{a} < \infty $, the $ L^{p} $ norms are Lyapunov functionals for
the process $ u_{t} $, so our estimates on the time passed in $ \tC^{+} $ will
follow from classical bounds on return times for Markov processes. In any case,
for some $ 0 < \overline{R}_{1} < \overline{R}_{2} < \infty $ to be chosen
later on, define also the following subsets of $ L^{\infty}$ 
\begin{equation}\label{eqn:center-Linf}
    \begin{aligned}
    \tC^{-}_{\infty} = \{ \mathbf{u} \in L^{\infty}  \; \colon \; \|
    \mathbf{u} \|_{\infty} \leqslant \overline{R}_{1} \} \;, \qquad
    \tC^{+}_{\infty} = \{ \mathbf{u} \in L^{\infty}  \; \colon \; \|
    \mathbf{u} \|_{\infty} \leqslant \overline{R}_{2} \} \;.
    \end{aligned}
\end{equation}
Then we consider two types of excursions: excursions in the center and from the
center.
Suppose that $ \mathbf{u}_{0} \not\in \tC^{-}$ (otherwise we would have to define
the stopping times starting from $ \sigma_{0} =0$), set $ \tau_{0} = 0 $ and then
define for $ i \in \NN $
\begin{align*}
    \sigma_{i} & = \begin{cases} \inf \{ t \geqslant \tau_{i}  \; \colon \;
        \mathbf{u}_{t} \in \tC^{-} \} & \text{ if } \mathbf{u}_{\tau_{i}} \not\in
        \tC^{\mathrm{m}} \;, \\
        \tau_{i} & \text{ if } \mathbf{u}_{\tau_{i}} \in \tC^{\mathrm{m}}\;,
    \end{cases} \\
    \tau_{i+1} & = \inf \{ t \geqslant \sigma_{i} \; \colon \; \mathbf{u}_{t}
    \not\in \tC^{+} \} \wedge (\sigma_{i} + 1) \;, \\
    \tau_{i+1}^{\mathrm{in}} & = \inf \{ t \geqslant \sigma_{i}  \; \colon \;
    \mathbf{u}_{t} \in \tC^{-}_{\infty} \} \wedge \tau_{i+1} \;, \\
    \tau_{i+1}^{\mathrm{out}} & = \inf \{ t \geqslant
    \tau_{i+1}^{\mathrm{in}}  \; \colon \; \mathbf{u}_{t} \not\in
    \tC^{+}_{\infty } \} \wedge \tau_{i+1} \;.
\end{align*}
Then $ \{ S_{i} \stackrel{\mathrm{def}}{=} \sigma_{i} - \tau_{i}  \}_{i
\in \NN} $ is the succession of outer excursions lengths
(return times from $ (\tC^{\mathrm{m}})^{c} $ to $ \tC^{-} $) and $ \{ T_{i}
\stackrel{\mathrm{def}}{=} \tau_{i} - \sigma_{i-1} \}_{i \in \NN \setminus \{
0 \}} $ is the succession of internal excursion lengths, roughly from $ \tC^{-} $ to $
(\tC^{\mathrm{m}})^{c} $: we have introduced the middle shell $
\tC^{\mathrm{m}} $ so that for later convenience we have the additional
property $ T_{i} \leqslant 1 $ (otherwise we could have simply defined $
\tau_{i+1} = \inf \{ t \geqslant \sigma_{i} \; \colon \; \mathbf{u}_{t}
\not\in \tC^{+} \} $ and the second case in the definition of $
\sigma_{i} $ would not be necessary, as well as the distinction between $
\tC^{\mathrm{m}} $ and $ \tC^{+} $). Similarly we consider $ \{
T_{i}^{\infty} \stackrel{\mathrm{def}}{=} \tau_{i}^{\mathrm{out}} -
\tau_{i}^{\mathrm{in}} \}_{i \in \NN \setminus \{ 0 \}} $.
For later convenience, let us also define the following stopping times, for a
given $ \mathbf{u} \in L^{p} $ (here $ \mathbf{u}_{t} $ is
the evolution of the Markov process $ (u_{t}, v_{t}) $ with initial condition $
\mathbf{u}_{0} = \mathbf{u}  $):
\begin{align*}
    & S(\mathbf{u}) =  \begin{cases} \inf \{ t \geqslant 0  \; \colon \;
        \mathbf{u}_{t} \in \tC^{-} \} & \text{ if } \mathbf{u} \not\in
        \tC^{\mathrm{m}} \;, \\
        0 & \text{ if } \mathbf{u} \in \tC^{\mathrm{m}}\;,
    \end{cases}\;, \qquad & & \text{ for } \mathbf{u} \in
    (\tC^{-})^{c} \;, \\
    & T (\mathbf{u})  = \inf \{ t \geqslant 0  \; \colon \; \mathbf{u}_{t} \not\in 
    \tC^{+}\} \wedge 1 \;, \qquad & & \text{ for } \mathbf{u} \in
    \tC^{\mathrm{m}} \;, \\
    &T^{\mathrm{in}} (\mathbf{u})  = \inf \{ t \geqslant 0 \; \colon \;
    \mathbf{u}_{t} \in \tC^{-}_{\infty} \} \wedge T (\mathbf{u}) \;, & & \text{
    for } \mathbf{u} \in \tC^{\mathrm{m}} \;, \\
    &T^{\mathrm{out}} (\mathbf{u})  = \inf \{ t \geqslant
        T^{\mathrm{in}}(\mathbf{u}) \; \colon \; \mathbf{u}_{t} \not\in
    \tC^{+}_{\infty } \} \wedge T(\mathbf{u}) \;, & & \text{ for }
    \mathbf{u} \in \tC^{\mathrm{m}} \;.
\end{align*}
Finally define for any $ B > 0 $
\begin{align}\label{eqn:def-L}
    X_{t} = \sup \Big\{ n \in \NN  \; \colon \; \sum_{i = 0}^{n} S_{i} \leqslant t
    \Big\} \;, \qquad L_{t}(B) = \sum_{i =1}^{X_{t}} c ( T_{i}^{\infty},
    B)\;,
\end{align}
which are respectively the total number of excursions by time $ t 
$ and a functional of the time spent in the $ L^{\infty}  $--norm center: the
constant $ c $ is defined in \eqref{eqn:def-c}. 
We want to prove the following result.

\begin{proposition}\label{prop:center-time}
    Under Assumption~\ref{assu:noise}, in the case $ \mf{a} < \infty $,
    consider the center sets defined in \eqref{eqn:center-Lp} and
    \eqref{eqn:center-Linf} with constants $ p(\mf{a}, d) \in [\mf{a}, \infty)
    $,  $ 0 < R_{1} < R_{2} < R_{3} < \infty $ and
    $ 0 < \overline{R}_{1} < \overline{R}_{2} < \infty $ as in
    Lemma~\ref{lem:moment-bound}.
    Then for any $ B > 0 $ there exists a
    positive (deterministic) $ \eta(B) > 0 $ and a $
    p(\mf{a},d) \in [\mf{ a}, \infty) $ such that for any
    initial condition $ \mathbf{u} \in L^{p} $
    \begin{equation*}
    \begin{aligned}
        \liminf_{t \to \infty} \frac{L_{t}(B)}{t} \geqslant \eta \;, \qquad \PP
        \text{--almost surely} \;,
    \end{aligned}
\end{equation*}
with $ L_{t} (B) $ as in \eqref{eqn:def-L}.
\end{proposition}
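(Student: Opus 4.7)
\noindent The plan is to combine a Foster--Lyapunov argument controlling the outer excursion lengths $S_i$ with a parabolic regularization argument guaranteeing, for each inner excursion, a uniformly positive probability of spending a fixed positive time in the $L^\infty$ center, and then to conclude via a law-of-large-numbers argument along the excursion decomposition. The fact that we are in the case $\mf{a} < \infty$ of Assumption~\ref{assu:noise} (and hence $\psi_0$ is deterministic, making $\mathbf{u}_t$ Markov) is what ultimately legitimizes iid-type arguments along the successive excursions.

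\medskip

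\noindent First I would use the $L^p$ energy estimate \eqref{eqn:a-priori-lp} applied separately to $u_t$ and $v_t$ to show that $V(\mathbf{u}) = \| \mathbf{u} \|_{L^p}^{p}$ satisfies a drift inequality of Foster type which is dissipative outside a large ball. The thresholds $R_1 < R_2 < R_3$ provided by Lemma~\ref{lem:moment-bound} are engineered precisely so that a standard Foster--Lyapunov argument bounds $\EE[S(\mathbf{u})]$ uniformly in $\mathbf{u} \in (\tC^{-})^{c}$, and by the strong Markov property this transfers to $\EE[S_i \mid \mF_{\tau_i}] \leqslant K$ almost surely, for some deterministic constant $K$. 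Combined with the construction constraint $T_i \leqslant 1$, a strong law for the stationary-like sequence $\{S_i + T_i\}$ yields an almost sure linear lower bound $X_t \geqslant t/(K + 1) + o(t)$.

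\medskip

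\noindent Next I would establish the uniform bound $\inf_{\mathbf{u} \in \tC^{\mathrm{m}}} \PP(T^{\infty}(\mathbf{u}) \geqslant \delta) \geqslant q$ for some $\delta, q \in (0, 1)$. Starting from $\mathbf{u} \in \tC^{\mathrm{m}}$, the Schauder smoothing \eqref{e:sobolev} — used exactly as at the end of the proof of Proposition~\ref{prop:wp} — together with the moment bounds for $z$ from Lemma~\ref{lem:reg-z}, controls $\|\mathbf{u}_{s_0}\|_\infty$ at some deterministic time $s_0 \in (0, 1)$ in terms of $\|\mathbf{u}\|_{L^p}$, $\|z\|_{C^{\gamma}}$ and the nonlinearity growth $\mf{a}$. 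For $\overline{R}_1$ large enough this brings the process into $\tC^{-}_\infty$ on an event of positive probability, on which the $L^p$ norm also stays below $R_3$ throughout $[0, s_0]$. From $\mathbf{u} \in \tC^{-}_\infty$, a second application of the mild formulation with $\overline{R}_2$ and $R_3$ chosen sufficiently larger than $\overline{R}_1$ shows, by continuity of the heat semigroup and Lemma~\ref{lem:reg-z}, that the solution remains in $\tC^{+}_\infty \cap \tC^{+}$ on $[0, \delta]$ with positive probability. Combining the two via the strong Markov property yields the desired uniform bound.

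\medskip

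\noindent Finally, since $c(T_i^\infty, B) \geqslant c(\delta, B) =: c_\delta > 0$ on $\{T_i^\infty \geqslant \delta\}$, a conditional Borel--Cantelli / strong law argument along the excursions yields
\begin{equation*}
\liminf_{n \to \infty} \frac{1}{n} \sum_{i = 1}^{n} c(T_i^\infty, B) \geqslant q \, c_\delta > 0, \qquad \PP\text{-a.s.},
\end{equation*}
which combined with the linear lower bound on $X_t$ from the first step gives $\liminf_{t \to \infty} L_t(B)/t \geqslant \eta > 0$ for some $\eta = \eta(B)$. The main obstacle is the uniform bound in the second step: one must simultaneously track the $L^p \to L^\infty$ parabolic smoothing, the polynomial-growth nonlinearity $A(u + z)$, and the regularity of $z$, while ensuring the solution does not exit $\tC^+$ during the smoothing window $[0, s_0]$. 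The dichotomy between the $L^p$-thresholds $R_1, R_2, R_3$ and the $L^\infty$-thresholds $\overline{R}_1, \overline{R}_2$ is precisely what separates these two constraints and makes the argument feasible.
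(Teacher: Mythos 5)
Your proposal is correct and follows essentially the same route as the paper: a Foster--Lyapunov bound on the outer excursion lengths $S_i$ (giving linearly many excursions), a parabolic-regularization argument giving a uniform positive probability that each inner excursion spends time $\delta$ in the $L^\infty$ center, and a martingale law of large numbers along the excursions justified by the Markov property. The paper packages the quantitative inputs into Lemma~\ref{lem:moment-bound} and proves the lower bound on $X_t/t$ by contradiction rather than directly, but these are cosmetic differences.
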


\begin{proof}
    By the martingale law of large numbers (from here on MLLN) we find
\begin{equation}\label{eqn:LLN-L}
    \begin{aligned}
        \liminf_{n \to \infty} \frac{1}{n} \sum_{i = 0}^{n} c
        (T_{i}^{\infty}, B) \geqslant \inf_{\mathbf{u}
        \in \tC^{\mathrm{m}}} \EE [ c( T^{\mathrm{out}}(\mathbf{u}) -
        T^{\mathrm{in}} (\mathbf{u}), B) ]\stackrel{\mathrm{def}}{=}
        \gamma_{1} \in [0, \infty] \;.
    \end{aligned}
\end{equation}
In particular, if we can in addition prove the following statement:
\begin{equation}\label{eqn:LLN-X}
    \begin{aligned}
        \liminf_{t \to \infty} \frac{X_{t}}{t} \geqslant
        \inf_{\mathbf{u} \in (\tC^{-})^{c}\cap \tC^{+}}
        \frac{1}{\EE [S (\mathbf{u})]} \stackrel{\mathrm{def}}{=}
        \gamma_{2} \in [0, \infty]\;, \qquad \PP
    \text{--almost surely}\;,
    \end{aligned}
\end{equation}
we would be able to conlude
\begin{align*}
    \liminf_{t \to \infty} \frac{L_{t} (B)}{t} & = \liminf_{t \to \infty}
    \frac{X_{t}}{t} \frac{1}{X_{t}} \sum_{i = 0}^{X_{t}} T_{i}^{\infty} \\
    & \geqslant
    \frac{\inf_{\mathbf{u} \in \tC^{\mathrm{m}}} \EE [ c (
        T^{\mathrm{out}}(\mathbf{u}) - T^{\mathrm{in}} (\mathbf{u}),
        B) ]}{\sup_{\mathbf{u} \in (\tC^{-})^{c}\cap \tC^{+}}
    \EE[S(\mathbf{u})]} = \gamma_{1} \cdot \gamma_{2} \stackrel{\mathrm{def}}{=} \eta \in [0, \infty) \;.
\end{align*}
Finally, to show that $ \eta > 0  $ we would need
\begin{align}\label{eqn:eta}
    \inf_{\mathbf{u} \in \tC^{\mathrm{m}}} \EE [c (T^{\mathrm{out}}(\mathbf{u}) -
    T^{\mathrm{in}} (\mathbf{u}), B)]> 0\;,\qquad \sup_{\mathbf{u} \in (\tC^{-})^{c}\cap
    \tC^{+}} \EE[S(\mathbf{u})] < \infty \;.
\end{align}
Hence, to conclude, we have to check \eqref{eqn:LLN-L}, \eqref{eqn:LLN-X} and
\eqref{eqn:eta}: the latter follows from Lemma~\ref{lem:moment-bound}, so we
restrict to proving the first two.

\textit{Proof of \eqref{eqn:LLN-L}}. We make use of the MLLN applied to the
following sum, with $ \mG_{i} = \mF_{\sigma_{i}} $ for $ i \in \NN $:
\begin{align*}
    P_{n} = \sum_{i = 0}^{n} T_{i}^{\infty} - \nu_{i} \;, \qquad \nu_{i} = \EE
    [ T_{i}^{\infty} \vert \mG_{i}] \;.
\end{align*}
In particular, since $ T_{i}^{\infty} \leqslant T_{i} \leqslant 1 $, we have
\begin{align*}
    \sum_{i = 0}^{\infty} \frac{\EE | T_{i}^{\infty} - \nu_{i} |^{2}}{i^{2}} < \infty
    \;, 
\end{align*}
so that $ \PP $--almost surely and in $ L^{1} $ we obtain the convergence $n^{-1}
P_{n} \to 0$. Hence, since by the Markov property of $ \mathbf{u}_{t} $ we have
$\nu_{i} \geqslant \gamma_{1}$ for each $ i \in \NN $, $ \PP $--almost surely,
the claim follows.

\textit{Proof of \eqref{eqn:LLN-X}.} Suppose that \eqref{eqn:LLN-X} does not
hold and that on the contrary, for some $ \alpha \in (0, 1) $ and along a subsequence $ \{ t_{k} \}_{k \in \NN} $ such that $
t_{k} \uparrow \infty $, we have $ X_{t_{k}}< \gamma_{2} \alpha t_{k} $. From
the definition of $ X_{t} $ we deduce that
\begin{equation}\label{eqn:contraddiction}
    \frac{1}{\lceil \gamma_{2} \alpha t_{k} \rceil} \sum_{i =1}^{ \lceil
    \gamma_{2} \alpha t_{k} \rceil}
    S_{i} \geqslant \frac{t_{k}}{\lceil \gamma_{2} \alpha t_{k}\rceil} \sim
    \frac{1}{\gamma_{ 2 } \alpha }   \;, \qquad \forall k \in \NN \;.
\end{equation}


Now let $ \mu_{i} = \EE[S_{i} \vert \widetilde{\mG}_{i}] $ where $
\widetilde{\mG}_{i} = \mF_{\tau_{i}} $ is the filtration generated by the excursion start times $
\{ \sigma_{i+1} \}_{i \in \NN} $. Then 
\begin{align*}
    M_{n} \stackrel{\mathrm{def}}{=} \sum_{i = 1}^{n} (S_{i} - \mu_{i} ) 
\end{align*}
is a martingale with respect to the discrete filtration $ (
\widetilde{\mG}_{i})_{i \in \NN} $. Hence, as above, by the moment bound
Lemma~\ref{lem:moment-bound}, we can conclde by the MLLN that
almost surely and in $ L^{1} $
\begin{align*}
    \frac{1}{n} M_{n} \to 0 \;.
\end{align*}
In particular, by our assumption on $ \alpha $ and $
t_{k} $ 
\begin{align*}
    \frac{1}{\alpha \gamma_{2}}
    \leqslant \liminf_{k \to \infty} \frac{1}{\gamma_{2} \alpha t_{k}} \sum_{i =
    1}^{\gamma_{2} \alpha t_{k}} S_{i} \leqslant \limsup_{k \to \infty} \frac{1}{\gamma_{2} \alpha
    t_{k}} \sum_{i =1}^{\gamma_{2} \alpha t_{k}} \mu_{i} \leqslant
    \frac{1}{\gamma_{2}} \;,
\end{align*}
which is a contraddiction, since $ \alpha \in (0, 1) $. This concludes the
proof of our result.
\end{proof}

We conclude with a moment estimate on our excursion times.

\begin{lemma}\label{lem:moment-bound}
    Under Assumption~\ref{assu:noise}, in the case $ \mf{a} < \infty $, there exist $ p(\mf{a}, d) \in [a, \infty) $,  $ 0< R_{1} < R_{2} < \infty $ and $
    0 < \overline{R}_{1} < \overline{R}_{2} < R_{3} < \infty  $ such that for some $
    \kappa > 0 $ and any $ B > 0 $:
    \begin{align*}
        \inf_{\mathbf{u} \in \tC^{\mathrm{m}}} \EE [ c(T^{\mathrm{out}}(\mathbf{u}) -
        T^{\mathrm{in}} (\mathbf{u}), B)]> 0\;, \quad
        \EE[ \exp \{ \kappa S(\mathbf{u}) \} ] < C(1 + \| \mathbf{u}
        \|_{L^{p}}^{p}) \;, \quad \forall \mathbf{u} \in L^{p} \;,
    \end{align*}
    where the center sets are defined in terms of $ R_{i},
    \overline{R}_{i} $ as in \eqref{eqn:center-Lp} and
    \eqref{eqn:center-Linf}.
\end{lemma}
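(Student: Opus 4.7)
The two claims are essentially independent: the exponential moment bound on $S(\mathbf{u})$ follows from a Foster--Lyapunov argument built directly on the $L^p$ energy inequality~\eqref{eqn:lp-final}, while the lower bound on $\EE[c(T^{\mathrm{out}}-T^{\mathrm{in}}, B)]$ exploits parabolic smoothing from $L^p$ to $L^\infty$ together with a short-time continuity estimate, both inherited from the proof of Proposition~\ref{prop:wp}.

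For the exponential moment of $S(\mathbf{u})$, recall that in the regime $\mf{a} < \infty$ the forcing $\psi_0$ is deterministic and uniformly bounded. Summing the energy inequality~\eqref{eqn:lp-final} for $u_t$ and $v_t$, one obtains, for some martingale $N$ and constants $c_1, c_2 > 0$,
\begin{equation*}
    \ud \|\mathbf{u}_t\|_{L^p}^p \leqslant \left(-c_1 \|\mathbf{u}_t\|_{L^p}^p + c_2\right)\,\ud t + \ud N_t.
\end{equation*}
Applying It\^o's formula to $t \mapsto e^{\kappa t}(1+\|\mathbf{u}_t\|_{L^p}^p)$ with $\kappa \in (0, c_1)$, one gets a supermartingale after time $\tau_{R_1} = \inf\{t \geqslant 0 : \|\mathbf{u}_t\|_{L^p} \leqslant R_1\}$ provided $R_1$ is chosen large enough that $c_2 \leqslant (c_1 - \kappa) R_1^p$. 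A standard optional-stopping and monotone-convergence argument (classical, cf.~\cite{Gyiongy00Wellposed}) then delivers $\EE[\exp\{\kappa S(\mathbf{u})\}] \leqslant C(1 + \|\mathbf{u}\|_{L^p}^p)$ once the middle shell radius $R_2 \geqslant R_1$ is fixed (so that $S(\mathbf{u}) \leqslant \tau_{R_1}$ whenever $\mathbf{u} \not\in \tC^{\mathrm{m}}$).

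For the lower bound on $\EE[c(T^{\mathrm{out}}-T^{\mathrm{in}}, B)]$, I build a positive-probability event on which the process starting from $\mathbf{u} \in \tC^{\mathrm{m}}$ quickly enters a strict $L^\infty$ ball and lingers in a slightly larger one for a definite amount of time. Fix $\delta \in (0, 1)$ and consider the three events: (i) $\sup_{0 \leqslant t \leqslant 1}\|\mathbf{u}_t\|_{L^p} \leqslant R_3$, so that $T(\mathbf{u}) = 1$; (ii) $\|\mathbf{u}_{\delta/2}\|_\infty \leqslant \overline{R}_1$; (iii) $\sup_{\delta/2 \leqslant t \leqslant \delta}\|\mathbf{u}_t\|_\infty \leqslant \overline{R}_2$. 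Markov's inequality applied to~\eqref{eqn:final-lp} makes $\PP(\mathrm{(i)})$ as close to $1$ as desired by taking $R_3$ large. For (ii), the smoothing bound~\eqref{eqn:final-l-inf} turns the $L^p$-control of $\mathbf{u}_0$ into an $L^\infty$-control at time $\delta/2$ up to an explicit deterministic constant that depends on $R_2, \delta$; choosing $\overline{R}_1$ large then forces (ii) on the event (i) with probability close to $1$. For (iii), I reapply~\eqref{eqn:final-l-inf} from the new initial time $\delta/2$, now starting from an $L^\infty$ bound $\overline{R}_1$, and pick $\overline{R}_2 > \overline{R}_1$ sufficiently large so that (iii) occurs with probability bounded below. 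On the intersection of (i)--(iii) one has $T^{\mathrm{in}} \leqslant \delta/2$ and $T^{\mathrm{out}} \geqslant \delta$, so $c(T^{\mathrm{out}}-T^{\mathrm{in}}, B) \geqslant c(\delta/2, B) > 0$ pointwise, and the claim follows by multiplying by the (uniformly positive) probability of the intersection.

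\textbf{Main obstacle.} The delicate part is making each of the three probability bounds in the second step genuinely uniform over all $\mathbf{u} \in \tC^{\mathrm{m}}$, and in particular controlling the nonlinear feedback term $\|u_{t+r}\|_{L^{\mf{a}p}}^{\mf{a}}$ that appears in~\eqref{eqn:final-l-inf} alongside the stochastic contribution $z_{t,s}$. This requires an inductive choice $p = p(\mf{a}, d)$ large enough that both the Sobolev embedding $W^{d/p+\kappa, p} \hookrightarrow L^\infty$ and the higher-moment $L^{\mf{a}p}$ energy bound from~\eqref{eqn:final-lp} are simultaneously available; the constants $R_1, R_2, R_3, \overline{R}_1, \overline{R}_2$ then have to be fixed in that order so that the Foster--Lyapunov drift closes and each smoothing window is respected.
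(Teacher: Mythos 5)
Your overall strategy coincides with the paper's: the exponential moment for $S(\mathbf{u})$ is obtained exactly as in the paper, by applying It\^o's formula to $e^{\kappa t}\|\mathbf{u}_t\|_{L^p}^p$ along the drift inequality $\ud\|\mathbf{u}_t\|_{L^p}^p\leqslant(-c_1\|\mathbf{u}_t\|_{L^p}^p+c_2)\,\ud t+\ud\mathbf{M}^{(p)}_t$, choosing $R_1$ so that the drift is strictly negative outside $\tC^-$ and concluding by optional stopping; and the positivity of $\EE[c(T^{\mathrm{out}}-T^{\mathrm{in}},B)]$ is reduced, as in the paper, to a three-event construction controlled by the martingale maximal inequality (for $T(\mathbf{u})$), and by the $L^p\to L^\infty$ smoothing bound \eqref{eqn:final-l-inf} (for $T^{\mathrm{in}}$ and for the persistence in $\tC^+_\infty$), with the radii fixed in the same order.

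There is, however, one concrete gap in your second step. From events (i)--(iii) you conclude that $T^{\mathrm{out}}\geqslant\delta$, but this does not follow: event (ii) only gives $T^{\mathrm{in}}\leqslant\delta/2$, and $T^{\mathrm{in}}$ may occur well before $\delta/2$; your event (iii) controls $\|\mathbf{u}_t\|_\infty$ only on $[\delta/2,\delta]$, so the process is free to exit $\tC^+_\infty$ at some time $s\in(T^{\mathrm{in}},\delta/2)$, in which case $T^{\mathrm{out}}=s$ and $T^{\mathrm{out}}-T^{\mathrm{in}}$ can be arbitrarily small. To close this you must control $\sup_{T^{\mathrm{in}}\leqslant s\leqslant\delta}\|\mathbf{u}_s\|_\infty$ starting from the \emph{random} time $T^{\mathrm{in}}$ (where you do have the $L^\infty$ bound $\overline{R}_1$), i.e.\ restart the smoothing estimate \eqref{eqn:final-l-inf} at $T^{\mathrm{in}}$ via the strong Markov property and then choose $\overline{R}_2$ large; this is precisely what the paper's estimate \eqref{eqn:mmt-conclusion} does. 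With that modification your argument matches the paper's proof.
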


\begin{proof}

    \textit{Bound on $ S (\mathbf{u}) $.} For $ \mathbf{u} \in L^{p} $,
    following the calculations in the proof of Proposition~\ref{prop:wp}, we have
    that for some $ c_{1}, c_{2} > 0 $:
    \begin{equation}\label{eqn:lyap}
        \ud \| \mathbf{u}_{t} \|_{L^{p}}^{p} \leqslant (- c_{1} \| \mathbf{u}
        \|_{L^{p}}^{p} + c_{2} ) \ud t + \ud \mathbf{M}^{(p)}_{t} \;,
    \end{equation}
    for a continuous square integrable martingale $
    \mathbf{M}^{(p)}_{t} $. Since for $ \mathbf{u} \in \tC^{\mathrm{m}} $ the
    bound is trivial, let us assume that $ \mathbf{u} \not\in
    \tC^{\mathrm{m}} $. Hence, for any $ \kappa > 0 $
    \begin{align*}
        \ud ( e^{\kappa t} \| \mathbf{u}_{t} \|_{L^{p}}^{p}) \leqslant
        e^{\kappa t} (- c_{1} \| \mathbf{u}
        \|_{L^{p}}^{p} + c_{2} + \kappa \| \mathbf{u} \|_{L^{p}}^{p} ) \ud t +
        e^{\kappa t}\ud \mathbf{M}^{(p)}_{t}\;.
    \end{align*}
    Assuming that $ R_{1}^{p} \geqslant  2 c_{2} c_{1}^{-1} +1$  and $ \kappa \leqslant
    c_{1} /2$, we deduce that for any $ n \in \NN $
    \begin{align*}
        e^{\kappa S \wedge n} \| \mathbf{u}_{S \wedge n}
        \|_{L^{p}}^{p}  + \frac{c_{1}}{2}
        \int_{0}^{S \wedge n} e^{\kappa r} \ud r \leqslant
        \int_{0}^{S \wedge n} e^{\kappa r} \ud \mathbf{M}^{(p)}_{r} + \|
        \mathbf{u}_{0} \|_{L^{p}}^{p} \;.
    \end{align*}
    In particular we conclude that
    \begin{align*}
        \kappa^{-1} \EE \left[ e^{\kappa S \wedge n} -1 \right] = \EE \left[ \int_{0}^{S
        \wedge n} e^{\kappa r}\ud r \right] \leqslant
        2 c_{1}^{-1} \| \mathbf{u} \|^{p} \;.
    \end{align*}
    Sending $ n \to \infty $ allows us to deduce that $
    \EE [\exp \{ \kappa S \}] < C (\kappa) (1 + \| \mathbf{u}
    \|_{L^{p}}^{p}) $, which is the required bound.

    \textit{Bound on $ T(\mathbf{u}) $.} It suffices to prove that  
    \begin{equation}\label{eqn:aim-mmt-1}
        \begin{aligned}
            \inf_{\mathbf{u} \in \tC^{\mathrm{m}}} \PP (T^{\mathrm{in}}
            (\mathbf{u}) < T^{\mathrm{out}}(\mathbf{u}) ) > 0 \;.
        \end{aligned}
    \end{equation}
    As a first step towards deducing this bound, we will prove that for any $
    \ve, \delta \in (0, 1) $, provided $ 0 < R_{1}(\ve, \delta) <
    R_{2}(\ve, \delta)< R_{3}(\ve, \delta) $ and $ 0 < \overline{R}_{1}(\delta,
    \ve)< \overline{R}_{2} (\delta, \ve) < \infty$ are sufficiently large, we
    have that
    \begin{equation}\label{eqn:aim-mmt-2}
        \begin{aligned}
            \inf_{\mathbf{u} \in \tC^{\mathrm{m}}}  \PP ( T (\mathbf{u}) > 2 \delta) &>
            1 - \ve \;, \\
            \inf_{\mathbf{u} \in \tC^{\mathrm{m}}}  \PP ( T^{\mathrm{in}}(\mathbf{u})
            < \delta) &> 1 - \ve \;.
        \end{aligned}
    \end{equation}
    For the first bound we use \eqref{eqn:lyap} to find that for $
    \mathbf{u}_{0} \in \tC^{\mathrm{m}} $ and a square integrable martingale $
    \mathbf{M}^{(p)}_{t} $ we have
    \begin{align*}
        \| \mathbf{u}_{t} \|_{L^{p}}^{p} \leqslant R_{2}^{p} + c_{2} t +
        \mathbf{M}^{(p)}_{t} \;.
    \end{align*}
    Hence 
    \begin{align*}
        \PP( T (\mathbf{u}) \leqslant  2 \delta) \leqslant \PP \left(
            R^{p}_{2} + c_{2} 2 \delta + \sup_{0 \leqslant s \leqslant 2 \delta}
        \mathbf{M}^{(p)}_{s} \geqslant R_{3}^{p} \right) = \PP  \left( \sup_{0
        \leqslant s \leqslant 2 \delta} \mathbf{M}^{(p)}_{s} \geqslant R_{3}^{p}
        - R^{p}_{2} - c_{2} 2 \delta \right)\;.
    \end{align*}
    Now if we define $ \lambda = R^{p}_{3} - R^{p}_{2} -2 c_{2} $ (since $
    \delta \in (0, 1)$), and assuming $ \lambda > 0 $ by choosing $
    R_{3} $ sufficiently large, we obtain, by using the definition of the
    martingales in \eqref{eqn:martingale}
    \begin{align*}
        \PP( T (\mathbf{u}) \leqslant  2 \delta) & \leqslant \PP( \sup_{0
        \leqslant s \leqslant 2 \delta} \mathbf{M}^{(p)}_{s} \geqslant \lambda
    )\\
    & \lesssim \lambda^{-2} \EE[ |\mathbf{M}^{(p)}_{2 \delta \wedge
    T(\mathbf{u})}|^{2}] \simeq \EE
    \langle \mathbf{M}^{(p)} \rangle_{ 2 \delta \wedge T (\mathbf{u})} \lesssim_{\psi}
    \lambda^{-2} \EE \int_{0}^{2 \delta \wedge T(\mathbf{u})} \|
    \mathbf{u}_{s} \|_{L^{p-1}}^{2(p-1)}  \ud s \\
    & \lesssim \lambda^{- 2} R_{3}^{2 (p-1)}  \delta \sim R^{-2}_{3} \delta\;,
    \end{align*}
    from the definition of $ \lambda $. Therefore choosing $ R_{3} ( \ve,
    \delta) \gg 1 $ so that $ R_{3}^{- 2} \delta \lesssim \ve $ we obtain the
    desired estimate. Now we pass to a bound on $
    T^{\mathrm{in}}(\mathbf{u}) $. Following \eqref{eqn:final-l-inf}, for $
    p \geqslant \mf{a} $ sufficiently large so that
    \begin{align*}
        \gamma \stackrel{\mathrm{def}}{=} 1 + \frac{d \mf{ a} }{p} + 2 \kappa  < 2 \;,
    \end{align*}
    we find that
        \begin{align*}
            \| \mathbf{u}_{\delta} \|_{\infty} \lesssim
            \delta^{ -\frac{\gamma}{2}}  \left\{ \| 
            \mathbf{u} \|_{L^{p/ \mf{a}}} + \delta \sup_{0 \leqslant s \leqslant \delta
            } \| \mathbf{u}_{s} \|_{L^{p}}^{\mf{a}} + \sup_{0
        \leqslant s \leqslant \delta} \| z_{ s} \|_{\infty} \right\} \;.
        \end{align*}
    Therefore we obtain
    \begin{align*}
        \PP( T^{\mathrm{in}} (\mathbf{u}) \geqslant \delta) & \leqslant \PP( \sup_{0
        \leqslant s \leqslant \delta } \| \mathbf{u}_{s} \|_{\infty} \geqslant
        \overline{R}_{1} \;, T(\mathbf{u}) \geqslant  \delta) \leqslant \PP
        ( \| \mathbf{u}_{\delta} \|_{\infty} \geqslant \overline{R}_{1} \;,
        T(\mathbf{u}) \geqslant \delta ) \\
        & \leqslant \overline{R}_{1}^{-1} \EE \big[ \| \mathbf{u}_{\delta}
        \|_{\infty} 1_{\{T(\mathbf{u}) \geqslant \delta \}} \big]  \lesssim
        \overline{R}_{1}^{-1} \delta^{- \frac{\gamma}{2}} \left\{ 
        R_{3}^{\mf{a}} + \EE \left[ \sup_{0 \leqslant s \leqslant \delta} \|
        z_{s} \|_{\infty}\right]  \right\} \;.
    \end{align*}
    Now, choosing $ \overline{R}_{1} (\delta, \ve) \gg 1 $ sufficiently large we obtain the desired estimate
    \begin{align*}
        \sup_{\mathbf{u} \in \tC^{\mathrm{m}}} \PP (T^{\mathrm{in}}
        (\mathbf{u}) \geqslant \delta  ) < \ve \;,
    \end{align*}
    which concludes the proof of \eqref{eqn:aim-mmt-2}. We now pass to
    \eqref{eqn:aim-mmt-1}, where we have
    \begin{align*}
        \PP ( T^{\mathrm{out}}(\mathbf{u}) - T^{\mathrm{in}}
        (\mathbf{u}) \leqslant \delta) & \leqslant \PP(
        T^{\mathrm{out}}(\mathbf{u}) - T^{\mathrm{in}}(\mathbf{u}) \leqslant
        \delta \;, T (\mathbf{u}) \geqslant 2 \delta \;,
        T^{\mathrm{in}}(\mathbf{u}) <  \delta) + 2 \ve \\
        & \leqslant \overline{R}_{2}^{- 1} \EE \left[
        \sup_{T^{\mathrm{in}}(\mathbf{u}) \leqslant s \leqslant 2 \delta} \|
        u_{s} \|_{\infty} 1_{\{ T (\mathbf{u}) \geqslant 2 \delta ,
    T^{\mathrm{in}}(\mathbf{u}) < \delta \}} \right] + 2 \ve \;.
    \end{align*}
    Now, to estimate the average above we follow again the bound
    \eqref{eqn:final-l-inf}, this time with initial condition in $
    L^{\infty} $. We obtain, for $ p (\mf{a}, d) \geqslant \mf{a} $
    sufficiently large
    \begin{equation}\label{eqn:mmt-conclusion}
        \sup_{T^{\mathrm{in}}(\mathbf{u}) \leqslant s \leqslant \delta}\| u_{s}
        \|_{\infty} \lesssim \| \mathbf{u}_{T^{\mathrm{in}}} \|_{\infty} + 2
        \delta \sup_{T^{\mathrm{in}}(\mathbf{u}) \leqslant s \leqslant \delta} \| \mathbf{u}_{s}
        \|_{L^{p}}^{\mf{a}} + \sup_{T^{\mathrm{in}}(\mathbf{u}) \leqslant s
        \leqslant 2 \delta} \| z_{T^{\mathrm{in}}(\mathbf{u}), t}
        \|_{\infty} \;.
    \end{equation}
    We conclude by Proposition~\ref{prop:wp} that
    \begin{align*}
        \EE \left[ \sup_{T^{\mathrm{in}}(\mathbf{u}) \leqslant s \leqslant 2 \delta} \|
        u_{s} \|_{\infty} 1_{\{ T (\mathbf{u}) \geqslant 2 \delta ,
    T^{\mathrm{in}}(\mathbf{u}) < \delta \}} \right] \lesssim
    \overline{R}_{1} + C \delta (1 + R_{3}^{p}) + \EE \left[\sup_{T^{\mathrm{in}}(\mathbf{u}) \leqslant s
        \leqslant 2 \delta} \| z_{T^{\mathrm{in}}(\mathbf{u}), t}
    \|_{\infty}  \right]\;,
    \end{align*}
    so that by the bounds on $ z $ in Lemma~\ref{lem:reg-z} and by choosing $
    \overline{R}_{2} \gg 1 $ sufficiently large, the bound
    \eqref{eqn:aim-mmt-1} follows from \eqref{eqn:mmt-conclusion}. Hence the
    proof of our result is concluded.

\end{proof}

\newcommand{\etalchar}[1]{$^{#1}$}
\def\polhk#1{\setbox0=\hbox{#1}{\ooalign{\hidewidth
  \lower1.5ex\hbox{`}\hidewidth\crcr\unhbox0}}}

\end{document}